\documentclass[a4paper,11pt]{amsart}
\usepackage{mathrsfs,amsmath,amssymb,amsthm,enumerate}
\usepackage{color}

\theoremstyle{plain}
\newtheorem{theorem}{Theorem}[section]
\newtheorem{proposition}[theorem]{Proposition}
\newtheorem{lemma}[theorem]{Lemma}
\newtheorem{corollary}[theorem]{Corollary}
\numberwithin{equation}{section}
\newtheorem*{maintheorem*}{Main Theorem}

\theoremstyle{definition}
\newtheorem{definition}[theorem]{Definition}
\newtheorem{remark}[theorem]{Remark}
\newtheorem{example}[theorem]{Example}
\newtheorem{question}[theorem]{Question}
\newtheorem{conjecture}[theorem]{Conjecture}

\newcommand{\C}{\mathbb{C}}
\newcommand{\Q}{\mathbb{Q}}
\newcommand{\R}{\mathbb{R}}
\newcommand{\Z}{\mathbb{Z}}

\newcommand{\cI}{\mathcal{I}}
\newcommand{\bI}{\mathbb{I}}
\newcommand{\bA}{\mathbb{A}}
\newcommand{\bB}{\mathbb{B}}

\newcommand{\cZ}{\mathcal{Z}}
\newcommand{\RZ}{\mathbb{R}\mathcal{Z}}

\newcommand{\cC}{\mathcal{C}}
\newcommand{\cR}{\mathcal{R}}

\newcommand{\e}{\mathbf{e}}

\newcommand{\balpha}{\boldsymbol{\alpha}}

\newcommand{\Hom}{\operatorname{Hom}}
\DeclareMathOperator{\row}{row}

\begin{document}
\title[Cohomology ring of the real toric space]{Multiplication structure of the cohomology ring of real toric spaces}

\author[S.~Choi]{Suyoung Choi}
\address{Department of Mathematics, Ajou University, 206, World cup-ro, Yeongtong-gu, Suwon 16499, Republic of Korea}
\email{schoi@ajou.ac.kr}

\author[H.~Park]{Hanchul Park}
\address{School of Mathematics, Korea Institute for Advanced Study (KIAS), 85 Hoegiro Dongdaemun-gu, Seoul 02455, Republic of Korea}
\email{hpark@kias.re.kr}


\date{\today}

\subjclass[2010]{57N65, 14M25 (Primary) 57S17, 55U10 (Secondary)}




\keywords{real toric variety, small cover, real toric space, real moment-angle complex, real Bott manifold, generalized real Bott manifold,  cohomogically symplectic manifold}

\begin{abstract}
    A real toric space is a topological space which admits a well-behaved $\Z_2^k$-action.
    Real moment-angle complexes and real toric varieties are typical examples of real toric spaces.
    A real toric space is determined by a pair of a simplicial complex $K$ and a characteristic matrix $\Lambda$.
    In this paper, we provide an explicit $R$-cohomology ring formula of a real toric space in terms of $K$ and $\Lambda$, where $R$ is a commutative ring with unity in which $2$ is a unit. 
    Interestingly, it has a natural $(\Z \oplus \row \Lambda)$-grading.
    As corollaries, we compute the cohomology rings of (generalized) real Bott manifolds in terms of binary matroids, and we also provide a criterion for real toric spaces to be cohomology symplectic.
\end{abstract}

\maketitle

\tableofcontents

\section{Introduction}
During the last half century, the topology of topological spaces admitting nice torus symmetries has been one of the most important problems in toric geometry and toric topology.
In 1970s, the cohomology of smooth complete toric varieties is computed by Jurkiewicz \cite{Jurkiewicz1980} (for projective case) and Danilov \cite{Danilov1978} (for general case).
Later, their topological generalization, recently known as \emph{quasitoric manifolds}, is also studied in \cite{Davis-Januszkiewicz1991}.
Interestingly, the cohomology ring of such manifolds can be beautifully represented as the quotient of a polynomial ring.
It should be mentioned that those toric spaces are all obtainable as quotients of the moment-angle complexes, which also play an important role in toric topology.
The cohomology of the moment-angle complex is also well known due to \cite{Buchstaber-Panov2015}.

On the other hand, it has also been studied the real analogue of toric spaces in toric geometry.
One notes that a toric variety admits a complex structure.
Hence, there is the natural involution on the toric variety associated to the conjugation, and the fixed part of this involution is known as a \emph{real toric variety}.
Similarly, each toric space has its real analogue; for instant, a \emph{small cover} and a \emph{real moment-angle complex} are counterparts of a quasitoric manifold and a moment-angle complex, respectively.
They admit well-behaved $\Z_2^k$-actions induced from the torus action on the associated toric spaces.
It is known that real toric varieties and small covers are quotients of the real moment-angle complex.
Motivated by this, the spaces which can be obtained from a real moment-angle complex admitting $\Z_2^m$-action by quotient of the subgroup of $\Z_2^m$ are, recently, called \emph{real toric spaces}.
More precisely, for a simplicial complex $K$ on $m$ vertices,  the \emph{real moment-angle complex} $\RZ_K$ (or \emph{the polyhedral product}
$(\underline{D^1},\underline{S^0})^K$) of $K$ is defined as follows:
\begin{align*}
    \RZ_K &= (\underline{D^1},\underline{S^0})^K \\
          &:= \bigcup_{\sigma\in K} \left\{(x_1,\dotsc,x_m)\in (D^1)^m \mid x_i \in S^0 \text{ when }i\notin \sigma\right\},
\end{align*}
    where $D^1=[0,1]$ is the unit interval and $S^0 = \{0,1\}$ is its boundary.
    There is a canonical $\Z_2^m$-action on $\RZ_K$ which comes from
     the $\Z_2$-action on the pair $(D^1,S^0)$.
    Let $n\le m$.
    A linear map $\Z_2^m \to \Z_2^n$ represented by an $(n \times m)$ $\Z_2$-matrix $\Lambda = \left(\lambda(1)\;\dotsb\;\lambda(m)\right)$ determines a subgroup $\ker\Lambda \subset \Z_2^m$ acting on $\RZ_K$.
    We denote by $M^\R(K,\Lambda)$ the associated real toric space, defined to be $\RZ_K/\ker{\Lambda}$.
    See \cite{Choi-Kaji-Theriault2017} for details.

Like toric spaces, a real toric space lies in a central position in the realm of toric topology, so the (co)homology of real toric spaces has been a particular interest in toric topology.
Jurkiwiecz \cite{Jurkiewicz1985} and Davis-Januszkiewicz \cite{Davis-Januszkiewicz1991} computed $\Z_2$-cohomology rings of real toric varieties and small covers, respectively.
It is of the form of the quotient of polynomial ring with $\Z_2$-coefficient.
It is not until 2012, in their unpublished paper \cite{ST2012}, Suciu and Trevisan established the formula for the rational cohomology group of a small cover as announced in \cite{Suciu2012}.
It has been confirmed by the authors in \cite{Choi-Park2017_torsion}, in which one can see that the formula can be generalized to real toric spaces and the similar formula holds for even much generalized coefficient than the rational coefficient.
In order to describe the formula, let us prepare some notations.
Let $K$ be a simplicial complex on $[m]:=\{1, \ldots, m\}$.
We note that there is the natural identification between $\Z_2^m$ and $2^{[m]}$, as we will see details in \eqref{eqn:rowvectoridentifiction}.
For each element $\omega \in \Z_2^m$, we denote by $K_\omega$ the induced subcomplex of $K$ induced by the subset of $[m]$ corresponding to $\omega$.
\begin{theorem}\cite[Theorem~4.6]{Choi-Park2017_torsion}\label{thm:cohogpofsmallcover}
    Let $M = M^\R(K,\Lambda)$ be a real toric space and $R$ a commutative ring in which $2$ is a unit. Then there is an $R$-linear isomorphism
    \begin{equation*}
        H^p(M;R) \cong \bigoplus_{\omega\in \row \Lambda} \widetilde{H}^{p-1}(K_\omega;R).
    \end{equation*}
\end{theorem}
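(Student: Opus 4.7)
The plan is to realize $M^\R(K,\Lambda)$ as the quotient $\RZ_K/\ker\Lambda$ and compute the cohomology of $\RZ_K$ in a way compatible with the ambient $\Z_2^m$-action. Since $\ker\Lambda$ is a finite $2$-group and $2$ is invertible in $R$, the averaging projector $\frac{1}{|\ker\Lambda|}\sum_{g\in\ker\Lambda} g^*$ yields an $R$-linear isomorphism
\[
H^p(M^\R(K,\Lambda); R) \;\cong\; H^p(\RZ_K; R)^{\ker\Lambda},
\]
so it suffices to isolate the $\ker\Lambda$-invariant part of $H^*(\RZ_K; R)$.

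Next I would establish the additive Hochster-style decomposition
\[
H^p(\RZ_K; R) \;\cong\; \bigoplus_{\omega \in \Z_2^m} \widetilde{H}^{p-1}(K_\omega; R),
\]
indexed by subsets $\omega\subseteq [m]$ (identified with $\Z_2^m$ as in the paper). A convenient model is the natural cubical CW structure on $\RZ_K$ inherited from $(D^1)^m = [0,1]^m$: the cellular cochain complex splits as a direct sum of subcomplexes parametrized by $\omega \in \Z_2^m$, each of which is identified (up to a single degree shift) with the reduced simplicial cochain complex of $K_\omega$. A BBCG-type stable splitting of the polyhedral product $(\underline{D^1},\underline{S^0})^K$ achieves the same thing.

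The third step is to upgrade this to a $\Z_2^m$-equivariant decomposition: $\Z_2^m$ preserves each summand and acts on the $\omega$-summand through the character
\[
t \;\longmapsto\; (-1)^{\langle t,\omega\rangle},
\]
where $\langle\,\cdot\,,\,\cdot\,\rangle$ is the standard bilinear pairing on $\Z_2^m$. The $\ker\Lambda$-invariants are then exactly those summands with $\omega \in (\ker\Lambda)^\perp$, and a standard linear algebra identity gives $(\ker\Lambda)^\perp = \row\Lambda$, which delivers the formula.

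The technical heart of the argument lies in this third step: verifying that the chosen model makes the $\Z_2^m$-action respect the splitting and act through the stated sign characters. Concretely, a generator of the $i$-th factor of $\Z_2$ in $\Z_2^m$ reverses orientation on exactly those cubical cells whose $\omega$-component contains $i$, producing the sign $(-1)^{\langle t,\omega\rangle}$. Once these signs are tracked carefully (no hidden $2$-torsion can obstruct the invariance computation, since $2 \in R^\times$), combining the three steps yields the desired isomorphism.
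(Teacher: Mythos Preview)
Your proposal is correct and follows essentially the same approach as the paper (Theorem~\ref{thm:cohoofsmallcover} together with Lemmas~\ref{lem:rowlambda} and~\ref{lem:actiononri}): transfer isomorphism, Hochster decomposition of $H^*(\RZ_K)$, sign-character action of $\Z_2^m$ on the $\omega$-summands, and the identification $(\ker\Lambda)^\perp=\row\Lambda$. The one step you leave abstract---arranging that the decomposition is $\Z_2^m$-equivariant with the stated characters---is exactly what the paper's basis change~\eqref{eqn:basischange} (namely $t=1^*-0^*$, $u=2\cdot\underline{01}^*$, so that the reflection sends $t\mapsto -t$, $u\mapsto -u$, $\mathbf{1}\mapsto\mathbf{1}$) accomplishes, and is not visible in the naive cellular basis $\{0^*,1^*,\underline{01}^*\}$.
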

Throughout the paper, we assume that $R$ is a commutative ring in which $2$ is a unit as above and the coefficient ring of the cohomology is $R$. A typical example of $R$ is the ring of rationals $\Q$.

As the next step, it is natural to ask how we can compute its ``cohomology ring''.
However, the multiplication structure of the cohomology of a real toric space, even a real moment-angle complex, is rather intricate and difficult to understand as mentioned in \cite[p.157]{Buchstaber-Panov2015}.
In the paper \cite{Choi-Park2017_torsion} of the authors, they provided one theoretical formulation of the cohomology ring as in \cite[Theorem~4.5]{Choi-Park2017_torsion}.
Although the formula is correct, it may be of little use in practice, unfortunately.
There are two main reasons.
One reason is that it contains a Raynold operation $N$ which leads to huge computation.
The other reason is that we do not know whether the isomorphism in Theorem~\ref{thm:cohogpofsmallcover} satisfies one expected property below or not.
We note that if two elements $\omega_1$ and $\omega_2$ are in $\row \Lambda$, so is $\omega_1 + \omega_2$.
It, thus, is reasonable to expect that for $\alpha \in \widetilde{H}^{p-1}(K_{\omega_1})$ and $\beta \in \widetilde{H}^{q-1}(K_{\omega_2})$, the cup product $\alpha \smile \beta$ is in $\widetilde{H}^{p+q-1}(K_{\omega_1 + \omega_2})$.
However, the formula given in \cite[Theorem~4.5]{Choi-Park2017_torsion} does not guarantee in transparent way that the natural group isomorphism such as in Theorem~\ref{thm:cohogpofsmallcover} satisfies the property.

In the present paper, we will provide fancier cohomology ring formula of real toric space which can resolve both problems what the original form has.
let $R\langle u_1,\dotsc,u_m; t_1,\dotsc,t_m \rangle$ be the differential free $R$-algebra with $2m$ generators such that
\[
    \deg u_i=1,\quad \deg t_i=0,\quad du_i=0,\quad dt_i = u_i
\]
and the differential $d$ satisfies the Leibniz rule $d(ab) = da\cdot b + (-1)^{\deg a}\cdot db$ for any homogeneous elements $a$ and $b$.
We denote by $\cR$ the quotient of $R\langle u_1,\dotsc,u_m; t_1,\dotsc,t_m \rangle$ under the following relations
\begin{multline}\label{eqn:utrelations}
    u_iu_i=0,\quad u_it_i=u_i,\quad t_iu_i=-u_i,\quad  t_it_i = 1,\\
      u_iu_j=-u_ju_i,\quad u_it_j = t_ju_i,\quad t_it_j=t_jt_i,
\end{multline}
for $i,j=1,\dotsc,m$ and $i\ne j$.
Let us use the notation $u_\sigma$ (respectively, $t_\sigma$) for the monomial $u_{i_1}\cdots u_{i_k}$ (respectively, $t_{i_1}\cdots t_{i_k}$) where $\sigma = \{i_1,\dotsc,i_k\}$, $i_1 <\dotsb < i_k$, is a subset of $[m]$.
The \emph{Stanley-Reisner ideal} $\cI$ is the ideal generated by all square-free monomials $u_\sigma$ such that $\sigma$ is not a simplex of $K$.
    We write the quotient algebra $\cR^K := \cR/ \cI$.
For $\omega \subseteq [m]$, let us denote by $\cR^K_\omega$ the $R$-submodule of $\cR^K$ generated by $u_\sigma t_{\omega \setminus \sigma}$ for $\sigma \subseteq \omega \subseteq [m]$ and $\sigma \in K$.

\begin{maintheorem*}\label{thm:maintheorem}
    There are $(\Z \oplus \row \Lambda)$-graded $R$-algebra isomorphisms
    $$
        H^\ast(M) \cong H(\cR^K|_{\row\Lambda},d) \cong \bigoplus_{\omega \in \row\Lambda} \widetilde{H}^{*-1}(K_\omega),
    $$
where $\cR^K|_{\row\Lambda}$ is the subalgebra of $\cR^K$ generated by $u_\sigma t_{\omega \setminus \sigma}$ such that $\omega \in \row\Lambda$, and the product structure on $\bigoplus_{\omega \in \row\Lambda} \widetilde{H}^{*-1}(K_\omega)$ is given by the canonical maps
\[
    \widetilde{H}^{k-1}(K_{\omega_1}) \otimes \widetilde{H}^{\ell-1}(K_{\omega_2}) \to \widetilde{H}^{k+\ell-1}(K_{\omega_1+\omega_2})
\]
which are induced by simplicial maps $K_{\omega_1+\omega_2} \to K_{\omega_1}\star K_{\omega_2}$ when $\star$ denotes the simplicial join.
\end{maintheorem*}
The main theorem can be viewed as the analogue for real moment-angle complexes and real toric spaces of Theorem~4.5.8 of \cite{Buchstaber-Panov2015}. Note that the product need not be zero when $\omega_1 \cap \omega_2 \ne \varnothing$ unlike that of moment-angle complexes $\cZ_K$. We also remark that this result enables to calculate the cohomology of the polyhedral product of the form $(\underline{CX},\underline{X})^K$ in the sense of Theorem~1.9 of \cite{BBCG-Cupproduct}.

The proof and details of the main theorem will be given in Theorems~\ref{thm:cohoofsmallcover} and~\ref{thm:productondelta}. In Section~\ref{sec:examples} we will provide a few examples about cohomology previously unknown and some related questions.

In Sections~\ref{subsec:real_Bott} and \ref{subsec:gen_real_bott}, we compute the rational cohomology ring of (generalized) real Bott manifolds. A real Bott manifold is one of the most important examples of real toric varieties, and each real toric manifold is determined by an upper triangular $\Z_2$-matrix $A$ whose diagonal entries are zero. Interestingly, we can show in Proposition~\ref{prop:R-cohom_real_bott} that the rational cohomology of the real Bott manifold corresponding to $A$ is completely determined by the binary matroid related to $A$.

In Section~\ref{subsec:cohom_symp}, we discuss the criterion as in Lemma~\ref{lem:cohosymrealtoricspace} for real toric spaces to be cohomology symplectic.
In addition, we give some necessary conditions for real moment-angle complex to be cohomology symplectic.





\section{Cohomology ring of a real moment-angle complex} \label{sec:cohom_real_moment_angle}
In this section, we study the cohomology ring of a real moment-angle complex using a natural CW structure of the cube $(D^1)^m$.
We basically follow the arguments of \cite{Cai2017} and \cite{Choi-Park2017_torsion}, but with the basis \eqref{eqn:basischange} which causes huge difference as we can see in Section~\ref{sec:cohoofrealtoric}.

We will use the notation $C_\ast(X)$ and $C^\ast(X)$ for the simplicial or cellular (co)chain complex of $X$ when $X$ is a simplicial complex or a CW complex respectively.
Let us fix a simplicial complex $K$ on the vertex set $[m]=\{1, \ldots, m\}$.
Regarding the interval $D^1 = [0,1]$ as the simplicial complex consisting of two $0$-cells $0,1$ and one $1$-cell $\underline{01}$, the $m$-cube $(D^1)^m$ has a natural CW structure coming from the Cartesian product operation.
More precisely, let $D_i^1\cong [0,1]$ be the $i$th factor of $(D^1)^m = D_1^1\times\dotsb\times D_m^1$ which is a CW complex with two $0$-cells $0_i, 1_i$ and one $1$-cells $\underline{01}_i$.
Then every cell of $(D^1)^m$ is given as
\[
    e_1\times\dotsb\times e_m, \quad e_i=0_i,\;1_i\;\text{or }\underline{01}_i.
\]
For $1 \le i \le m$, the cochain complex $C^\ast(D_i^1)$ is the dual graded $R$-module $\Hom(C_\ast(D_i^1),R) = \langle 0_i^\ast,1_i^\ast,\underline{01}_i^\ast\rangle $, such that $\deg 0_i^\ast  = \deg 1_i^\ast = 0$ and $\deg \underline{01}_i^\ast = 1$ where $e_i^\ast$ is the cochain dual to the cell $e_i$.
Furthermore, there is the (simplicial) cup product $\smile$ making $C^\ast(D_i^1)$ a graded $R$-algebra. Among the nine possible combinations $a \smile b$ when $a,\,b \in \{0^*,1^*,\underline{01}^*\}$, observe that only the following survive:
\begin{equation}\label{eqn:cupprodofinterval}
    0^* \smile 0^* = 0^*,\quad 1^* \smile 1^* = 1^*,\quad 0^* \smile \underline{01}^* = \underline{01}^* \smile 1^* = \underline{01}^*.
\end{equation}

We define the differential $R$-module $B^\ast((D^1)^m)$, which is $C^\ast(D_1^1) \otimes \dotsb \otimes C^\ast(D_m^1)$ with a differential $d$ such that
\[
    d(e_1^\ast\otimes\dotsb\otimes e_m^\ast) = \sum_{i=1}^m(-1)^{\sum_{j=1}^{i-1}\deg e_j^\ast} e_1^\ast\otimes\dotsb\otimes e_{i-1}^\ast\otimes de_i^\ast\otimes e_{i+1}^\ast\otimes\dotsb\otimes e_m^\ast.
\]
Note that $d0_i^\ast = -\underline{01}_i^\ast$, $d1_i^\ast = \underline{01}_i^\ast$, and $d\underline{01}_i^\ast = 0$. Actually, the cup products on the factors of $B^\ast((D^1)^m)$ extend to the whole $R$-module so that
\[
   ( e_1^\ast\otimes\dotsb\otimes e_m^\ast) \smile ( f_1^\ast\otimes\dotsb\otimes f_m^\ast) = (-1)^c \bigotimes_{i=1}^m e_i^\ast \smile f_i^\ast,
\]
where
\[
    c = \sum_{i=1}^m \deg f_i^\ast \sum_{j > i} \deg e_j^\ast.
\]
Refer to (11) of \cite{Cai2017}. Then the extended cup product turns $B^\ast((D^1)^m)$ into a graded $R$-algebra.

The \emph{real moment-angle complex} of $K$, denoted by $\RZ_K$, is defined by
\[
    \RZ_K = \bigcup_{\sigma \in K} \prod_{j=1}^{m} Y_\sigma^j,\quad\text{where}\quad Y_\sigma^j = \left\{
                   \begin{array}{ll}
                     D^1_{j}, & \hbox{if $j \in \sigma$;} \\
                     \partial D^1_j = \{0_j, 1_j\}, & \hbox{otherwise.}
                   \end{array}
                 \right.
\]
It is obvious that $\RZ_K$ is a subcomplex of $(D^1)^m$ as a CW complex.
The differential $R$-algebra $B^\ast(\RZ_K)$ is defined as follows.
Since $B^\ast((D^1)^m)$ can be thought as $\Hom(C_\ast((D^1)^m),R)$ where $C_\ast((D^1)^m)$ is the cellular chain complex of $(D^1)^m$, we put $B^\ast(\RZ_K)  =  \Hom(C_\ast(\RZ_K),R)$, where $C_\ast(\RZ_K)$ is the restriction of $C_\ast((D^1)^m)$ to $\RZ_K$, and inherits the cup product from $B^\ast((D^1)^m)$ as an $R$-subalgebra.
A direct application of Theorem~3.1 of \cite{Cai2017} implies that $B^\ast(\RZ_K)$ is indeed a well-defined differential $R$-algebra and the following holds.

\begin{theorem}\cite[Theorem~5.1]{Cai2017}\label{thm:cohoofrzp}
    There is a graded $R$-algebra isomorphism
    \[
        H^\ast(\RZ_K) \cong H(B^\ast(\RZ_K),d).
    \]
\end{theorem}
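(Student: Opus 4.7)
The plan is to identify $B^*(\RZ_K)$, as a differential graded $R$-algebra, with the cellular cochain algebra of $\RZ_K$ under its CW structure inherited from $(D^1)^m$, and then appeal to the standard isomorphism between cellular and singular cohomology rings.

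First I would treat $(D^1)^m$ as a product of CW complexes, where each factor $D^1_i$ carries the minimal structure with $0$-cells $0_i, 1_i$ and $1$-cell $\underline{01}_i$. Viewing $D^1_i$ as a simplicial $1$-complex, its simplicial cochain algebra is precisely $C^*(D^1_i)$ with the nonzero cup products of \eqref{eqn:cupprodofinterval}. A cellular diagonal approximation for the product CW structure on $(D^1)^m$ is obtained by tensoring the factor-wise Alexander--Whitney diagonals, and dually this yields exactly the Koszul-signed tensor product on $B^*((D^1)^m)$ displayed just above the theorem, with sign $(-1)^c$. Hence $B^*((D^1)^m)$ is isomorphic as a DGA to the cellular cochain algebra of $(D^1)^m$ for an explicit diagonal approximation.

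Next I would exploit that $\RZ_K$ is a CW subcomplex of $(D^1)^m$: a product cell $e = e_1 \times \dotsb \times e_m$ lies in $\RZ_K$ exactly when $\tau(e) := \{i : e_i = \underline{01}_i\}$ is a face of $K$, and closure of $K$ under taking faces guarantees that this family is closed under taking CW faces. Restriction of cochains gives a surjection $i^* \colon B^*((D^1)^m) \twoheadrightarrow B^*(\RZ_K)$ whose kernel is the span of $e^*$ for $e \notin \RZ_K$. I would verify that this kernel is a two-sided ideal: for a nonzero product $e^* \smile f^* = \pm g^*$ the rules \eqref{eqn:cupprodofinterval} force $\tau(e) \cap \tau(f) = \varnothing$ and $\tau(g) = \tau(e) \cup \tau(f)$, so whenever $\tau(e) \notin K$ we also have $\tau(g) \notin K$ (and symmetrically with $f$). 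Thus the quotient $B^*(\RZ_K)$ inherits a well-defined DGA structure and $i^*$ is a DGA homomorphism.

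Finally, since the cellular diagonal approximation of $(D^1)^m$ described above restricts to the subcomplex $\RZ_K$, the DGA $B^*(\RZ_K)$ is exactly the cellular cochain algebra of $\RZ_K$ for this restricted diagonal. The standard comparison between cellular and singular cup products then yields the desired graded $R$-algebra isomorphism $H(B^*(\RZ_K), d) \cong H^*(\RZ_K; R)$. I expect the main technical obstacle to be verifying that the Koszul sign $(-1)^c$ in the product on $B^*((D^1)^m)$ really coincides with the cellular Alexander--Whitney sign for a product CW complex; this is the bookkeeping handled by Theorem~3.1 of \cite{Cai2017}, which I would invoke rather than rederive.
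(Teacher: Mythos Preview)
Your proposal is correct and follows essentially the same route as the paper: the paper does not give its own proof of this statement but cites it directly from \cite{Cai2017}, noting only that ``a direct application of Theorem~3.1 of \cite{Cai2017}'' shows $B^\ast(\RZ_K)$ is a well-defined differential $R$-algebra. Your sketch spells out the mechanics behind that citation---the product CW structure, the Alexander--Whitney diagonal, the ideal check for the Stanley--Reisner kernel---and then, like the paper, defers the sign bookkeeping to Cai's Theorem~3.1, so there is no substantive difference in approach.
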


Now we perform a ``basis change'' of $C^\ast(D^1)$ by
\begin{equation}\label{eqn:basischange}
    \mathbf{1} = 1^\ast+0^\ast,\quad t = 1^\ast - 0^\ast,\quad u = 2\cdot\underline{01}^\ast.
\end{equation}
This is a genuine basis change, because $2$ is a unit in the coefficient ring $R$. Indeed, one has
\[
    1^\ast = \frac12 ( \mathbf{1} + t),\text{\quad } 0^\ast = \frac12 ( \mathbf{1} - t),\text{\quad and \quad }\underline{01}^\ast = \frac12 u.
\]

From \eqref{eqn:cupprodofinterval}, the following identities are easily checked.
\begin{multline*}
    u \smile u = 0,\quad u \smile t =  u,\quad t \smile u = -u,\quad t \smile t = \mathbf{1},\\
    \mathbf{1} \smile u = u \smile \mathbf{1} = u,\quad \mathbf{1} \smile t = t \smile \mathbf{1} = t,\quad \mathbf{1} \smile \mathbf{1} = \mathbf{1}.
\end{multline*}

    It should be emphasized that the above basis change \eqref{eqn:basischange} is modified from (17) of \cite{Cai2017}, or (3.1) of \cite{Choi-Park2017_torsion}.
    The original basis change in \cite{Cai2017} and \cite{Choi-Park2017_torsion} is
    \begin{equation}\label{eqn:oldbasis}
        \mathbf{1} = 1^\ast+0^\ast,\; t = 1^\ast,\; u = \underline{01}^\ast.
    \end{equation}
    This modification leads great improvements of computability which will be explained later.
%
It is convenient to regard $B^\ast(\RZ_K)$ as a differential $R$-algebra with $2m$ generators $u_1,\dotsc,u_m,t_1,\dotsc,t_m$ such that $\mathbf{1}_1\smile\dotsc\smile\mathbf{1}_m$ is the unique identity.
More precisely, let $R\langle u_1,\dotsc,u_m; t_1,\dotsc,t_m \rangle$ be the differential free $R$-algebra with $2m$ generators such that
\[
    \deg u_i=1,\quad \deg t_i=0,\quad du_i=0,\quad dt_i = u_i
\]
and the differential $d$ satisfies the Leibniz rule $d(ab) = da\cdot b + (-1)^{\deg a}\cdot db$ for any homogeneous elements $a$ and $b$.
We denote by $\cR$ the quotient of $R\langle u_1,\dotsc,u_m; t_1,\dotsc,t_m \rangle$ under the following relations
\begin{multline}\label{eqn:utrelations}
    u_iu_i=0,\quad u_it_i=u_i,\quad t_iu_i=-u_i,\quad  t_it_i = 1,\\
      u_iu_j=-u_ju_i,\quad u_it_j = t_ju_i,\quad t_it_j=t_jt_i,
\end{multline}
for $i,j=1,\dotsc,m$ and $i\ne j$.
Let us use the notation $u_\sigma$ (respectively, $t_\sigma$) for the monomial $u_{i_1}\cdots u_{i_k}$ (respectively, $t_{i_1}\cdots t_{i_k}$) where $\sigma = \{i_1,\dotsc,i_k\}$, $i_1 <\dotsb < i_k$, is a subset of $[m]$.
The \emph{Stanley-Reisner ideal} $\cI$ is the ideal generated by all square-free monomials $u_\sigma$ such that $\sigma$ is not a simplex of $K$.
    We write the quotient algebra $\cR^K := \cR/ \cI$.
Note that, as an $R$-module, $\cR^K $ is freely generated by the square-free monomials $u_\sigma t_{\omega \setminus \sigma}$, where $\sigma \subseteq \omega \subseteq [m]$ and $\sigma \in K$.
Now observe that we can identify the differential $R$-algebras
\[
    \cR \cong B^\ast((D^1)^m) \quad \text{ and }\quad \cR^K \cong B^\ast(\RZ_K).
\]
Then Theorem~\ref{thm:cohoofrzp} can be restated as follows.
\begin{theorem}\cite[Theorem~5.1]{Cai2017}\label{thm:cohoofrzpnew}
    There is a graded $R$-algebra isomorphism
    \[
        H^\ast(\RZ_K) \cong H(\cR^K,d).
    \]
\end{theorem}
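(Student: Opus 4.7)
The plan is to reduce Theorem~\ref{thm:cohoofrzpnew} to Theorem~\ref{thm:cohoofrzp} by exhibiting an isomorphism of differential graded $R$-algebras $\cR^K \cong B^\ast(\RZ_K)$. Since Theorem~\ref{thm:cohoofrzp} already identifies $H^\ast(\RZ_K)$ with $H(B^\ast(\RZ_K),d)$, the remaining task is purely algebraic: transport the DGA structure of $B^\ast(\RZ_K)$ along the basis change \eqref{eqn:basischange} and match it with the presentation defining $\cR^K$.

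First I would work one factor at a time. On a single interval $D_i^1$, the formulas \eqref{eqn:basischange} genuinely change the basis of $C^\ast(D_i^1)$ because $2$ is a unit in $R$. Plugging them into the nine equalities in \eqref{eqn:cupprodofinterval} yields precisely the single-index relations $u_i^2=0$, $u_it_i=u_i$, $t_iu_i=-u_i$, $t_i^2=\mathbf{1}_i$ of \eqref{eqn:utrelations}, with $\mathbf{1}_i$ acting as a two-sided identity. The differential formulas $d0_i^\ast=-\underline{01}_i^\ast$, $d1_i^\ast=\underline{01}_i^\ast$ translate to $du_i=0$, $dt_i=u_i$, $d\mathbf{1}_i=0$, matching the prescribed differential on $\cR$.

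Second I would pass to the $m$-fold tensor product. On $B^\ast((D^1)^m)$ the extended cup product carries the Koszul sign $(-1)^c$ with $c=\sum_i \deg f_i^\ast\sum_{j>i}\deg e_j^\ast$; evaluated on products of the new generators $u_i,\,t_i,\,\mathbf{1}_i$ it yields exactly the cross-index relations $u_iu_j=-u_ju_i$, $u_it_j=t_ju_i$, $t_it_j=t_jt_i$ for $i\ne j$. The tensor differential satisfies the Leibniz rule with the same sign convention as in $\cR$. Hence sending $u_i\mapsto u_i$, $t_i\mapsto t_i$ and the unit to $\mathbf{1}_1\smile\dotsb\smile\mathbf{1}_m$ descends to a DGA homomorphism $\varphi:\cR\to B^\ast((D^1)^m)$. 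Both algebras are free $R$-modules of rank $4^m$, with bases indexed by pairs of disjoint subsets $(\sigma,\tau)\subseteq [m]\times[m]$ (via $u_\sigma t_\tau$ on the left and tensor products of $0_i^\ast,\,1_i^\ast,\,\underline{01}_i^\ast$ on the right), and $\varphi$ sends basis to basis up to units in $R$; thus $\varphi$ is an isomorphism.

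Finally, I would identify the Stanley-Reisner ideal with the kernel of the restriction $B^\ast((D^1)^m)\to B^\ast(\RZ_K)$. A cell $e_1\times\dotsb\times e_m$ of $(D^1)^m$ lies in $\RZ_K$ exactly when the set $\sigma:=\{i: e_i=\underline{01}_i\}$ is a simplex of $K$, and under $\varphi$ the monomial $u_\sigma t_\tau$ is a multiple of the cochain dual to cells whose $1$-dimensional factors are indexed precisely by $\sigma$. Consequently the cochains vanishing on $C_\ast(\RZ_K)$ are generated as an ideal by the monomials $u_\sigma$ with $\sigma\notin K$, which is exactly $\cI$. Passing to quotients gives the desired DGA isomorphism $\cR^K\cong B^\ast(\RZ_K)$, and combining with Theorem~\ref{thm:cohoofrzp} yields the theorem. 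The main technical hurdle in this plan is the sign bookkeeping in the tensor step: one must expand the Koszul factor $(-1)^c$ on all relevant pairs of generators to confirm that exactly the signs appearing in \eqref{eqn:utrelations} arise, with no unexpected signs for $u_it_j$ or $t_it_j$.
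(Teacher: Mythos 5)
Your proposal is correct and follows essentially the same route as the paper, which obtains Theorem~\ref{thm:cohoofrzpnew} simply by observing that the basis change \eqref{eqn:basischange} (invertible because $2$ is a unit in $R$) identifies the differential graded $R$-algebras $\cR \cong B^\ast((D^1)^m)$ and $\cR^K \cong B^\ast(\RZ_K)$, with the relations \eqref{eqn:utrelations} read off from \eqref{eqn:cupprodofinterval} and the Koszul sign, and then invokes Theorem~\ref{thm:cohoofrzp}. One trivial slip: the common rank of $\cR$ and $B^\ast((D^1)^m)$ is $3^m$, not $4^m$ (pairs of disjoint subsets $\sigma,\tau\subseteq[m]$ correspond to three choices per index), and $\varphi$ carries the monomial basis to a basis via an invertible change of basis rather than to individual dual cells up to units; neither point affects the argument.
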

For $\omega \subseteq [m]$, let us denote by $\cR^K_\omega$ the $R$-submodule of $\cR^K$ generated by $u_\sigma t_{\omega \setminus \sigma}$ for $\sigma \subseteq \omega \subseteq [m]$ and $\sigma \in K$.
The differential $d$ is preserved in $\cR^K_\omega$ for each $\omega\subseteq [m]$, and thus there is a $(\Z \oplus 2^{[m]})$-grading on the $R$-module $H^\ast(\RZ_K)$
\begin{equation}\label{eqn:bigrading}
    H^{i,\omega}(\RZ_K) \cong H^i(\cR^K_\omega,d),
\end{equation}
where $2^{[m]}$ denotes the power set of $[m]$.
However, as an $R$-algebra, $\cR^K$ is generally not $(\Z \oplus 2^{[m]})$-graded due to the relations $u_it_i = u_i$ and $t_iu_i = -u_i$. In Cai's original settings, one has the union $\cup$
\[
    \smile\colon H^{p,\omega}(\RZ_K)\otimes H^{p',\omega'}(\RZ_K) \to H^{p+p',\omega \cup \omega'}(\RZ_K),
\]
and the union does not give a group structure on $2^{[m]}$.
Nevertheless, as we will see in the next section, it turns out by Theorem~\ref{thm:productondelta} that at cohomology level, $H(\cR^K,d)$ is indeed a $(\Z \oplus 2^{[m]})$-graded $R$-algebra.

Denote by $K_\omega = \{\sigma \in K \mid \sigma \subseteq \omega \}$ the induced subcomplex of $K$ with respect to $\omega$. For each $\omega \subseteq [m]$, observe that there is a bijective cochain map of cochain complexes
\begin{align*}
    f_\omega\colon \cR^K_\omega \;&{\stackrel{\cong}{\longrightarrow}} \; C^\ast(K_\omega) \\
    u_\sigma t_{\omega\setminus \sigma} &\longrightarrow \sigma^\ast,
\end{align*}
where $C^\ast(K_\omega)$ means the simplicial cochain complex of $K_\omega$.
This induces an $R$-linear isomorphism of cohomology
\begin{equation}\label{eqn:pomegahomology}
    H^p(\cR^K_\omega,d) \stackrel{\cong}{\longrightarrow} \widetilde{H}^{p-1} (K_\omega)
\end{equation}
and thus one concludes that there is an $R$-linear isomorphism, well-known as the \emph{Hochster formula},
\begin{equation}\label{eqn:rzphochster}
     H^{p,\ast}(\RZ_K) = \bigoplus_{\omega\subseteq[m]} H^{p,\omega}(\RZ_K) \cong \bigoplus_{\omega\subseteq[m]} \widetilde{H}^{p-1}(K_\omega).
\end{equation}
See {\cite[Proposition~3.3]{Cai2017}} for details.


The cohomology of the \emph{moment-angle complex} $\cZ_K$ is beautifully described in Section~4.5 of \cite{Buchstaber-Panov2015}. In order to compare the cohomology of $\RZ_K$ and $\cZ_K$, we include a brief explanation of $H^\ast(\cZ_K)$. Let $R\langle u_1,\dotsc,u_m; t_1,\dotsc,t_m \rangle^\C$ be the differential free $R$-algebra with $2m$ generators such that
\[
    \deg u_i=2,\quad \deg t_i=1,\quad du_i=0,\quad dt_i = u_i
\]
and the differential $d$ satisfies the Leibniz rule $d(ab) = da\cdot b + (-1)^{\deg a}\cdot db$ for any homogeneous elements $a$ and $b$. We denote by $\cR^\C$ the quotient of $R\langle u_1,\dotsc,u_m; t_1,\dotsc,t_m \rangle^\C$ under the following relations
\begin{multline}\label{eqn:utrelations_zk}
    u_iu_i=0,\quad u_it_i=0,\quad t_iu_i=0,\quad  t_it_i = 0,\\
      u_iu_j=u_ju_i,\quad u_it_j = t_ju_i,\quad t_it_j= - t_jt_i,
\end{multline}
for $i,j=1,\dotsc,m$ and $i\ne j$. Let the Stanley-Reisner ideal $\cI^\C$ be defined analogously to the $\RZ_K$ case. Then one has a graded $R$-algebra isomorphism
\begin{equation}\label{eqn:cohoofzk}
    H^\ast(\cZ_K) \cong H(\cR^\C/\cI^\C,d).
\end{equation}
\begin{remark}
    Besides the similarity of the two $R$-algebras, there are two main concerns.
    \begin{enumerate}
        \item For $\cZ_K$, \eqref{eqn:cohoofzk} holds for an arbitrary coefficient ring $R$. Indeed, we could choose the basis \eqref{eqn:oldbasis} to obtain the result of \cite{Cai2017} for $H^\ast(\RZ_K;R)$ for arbitrary coefficient.
        \item The difference of \eqref{eqn:utrelations} and \eqref{eqn:utrelations_zk} yields significant contrast of the rings $H^\ast(\RZ_K;R)$ and $H^\ast(\cZ_K;R)$. The analogue of Theorem~\ref{thm:productondelta} still holds for $\cZ_K$, but in that case, the cup product is zero if $\omega \cap \omega' \ne \varnothing$, which is not generally true for $\RZ_K$.
    \end{enumerate}
\end{remark}

\section{Cohomology ring of a real toric space}\label{sec:cohoofrealtoric}
We recall that there is a natural action of $\Z_2^m$ on $\RZ_K \subseteq (D^1)^m$ by
\[
    (g_1,\dotsc,g_m) \cdot (x_1,\dotsc,x_m) = (g_1\cdot x_1, \dotsc, g_m \cdot x_m),
\]
where
\begin{equation}\label{eqn:actiononcube}
    g_i\cdot x_i = \left\{
                   \begin{array}{ll}
                     x_i, & \hbox{if $g_i = 0$;} \\
                     1-x_i, & \hbox{if $g_i = 1$.}
                   \end{array}
                 \right.
\end{equation}
Any subgroup of $\Z_2^m$ can be specified as the kernel of a surjective linear map $\Lambda \colon \Z_2^m \to \Z_2^q$ for some $q\leq m$. When $q = 0$, we put $\Lambda$ be the empty matrix by convention. We denote the $i$th column by $\Lambda(i)$.
If $\Lambda$ satisfies the following condition called the \emph{non-singularity condition}
$$
    \Lambda(i_1), \ldots, \Lambda(i_\ell) \text{ are linearly independent in $\Z_2^q$ if $\{i_1, \ldots, i_\ell\} \in K$},
$$ then $\Lambda$ is called a (mod $2$) \emph{characteristic function} over $K$.
One can check (or see \cite[Lemma~3.1]{Choi-Kaji-Theriault2017}) that the action of $\ker \Lambda$ is free on $\RZ_K$ if and only if $\Lambda$ is a characteristic function over $K$. In this case, $\ker \Lambda$ is isomorphic to $\Z_2^{m-q}$.

For a surjective linear map $\Lambda \colon \Z_2^m \to \Z_2^q$, the quotient space $\RZ_K / \ker \Lambda$ is denoted by $M^\R(K,\Lambda)$ and is called the \emph{real toric space} associated to $(K, \Lambda)$. If $K$ is the boundary of a simplicial $n$-polytope and $\Lambda$ is non-singular, then $M^\R(K,\Lambda)$ is a smooth $n$-manifold. If we add the condition $q=n$, then $M^\R(K,\Lambda)$ is the well-known \emph{small cover}.

Our main tool here is the \emph{transfer homomorphism} for the finite group action. For the transfer homomorphism, refer for example \cite[III.2]{Bredon1972} or \cite[Section~3.G]{Hatcher2002} when the action is free. When a group $\Gamma$ acts on the $R$-algebra $A$, let us recall that $A^\Gamma$ means the subalgebra consisting of elements fixed by the $\Gamma$-action.
\begin{theorem}{(See \cite[Theorem~3.2]{Choi-Park2017_torsion})}\label{thm:transferhomomorphism}
    Let $X$ be a CW complex and $\Gamma$ a finite group acting on $X$. Then there is a graded $R$-algebra isomorphism
    \[
        H^\ast(X/\Gamma;R)\cong H^\ast(X;R)^\Gamma,
    \]
    when $R$ is a commutative ring in which the order $|\Gamma|$ is a unit.
\end{theorem}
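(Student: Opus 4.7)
The plan is to prove the isomorphism at the cochain level by identifying the cellular cochains of $X/\Gamma$ with the $\Gamma$-invariant cellular cochains on $X$, and then using the exactness of the $\Gamma$-invariants functor on $R[\Gamma]$-modules, which holds precisely because $|\Gamma|$ is a unit in $R$ (Maschke's theorem). First I would refine the CW structure on $X$, if necessary, so that $\Gamma$ acts cellularly and any cell fixed setwise by some $g\in\Gamma$ is fixed pointwise; this can always be arranged by a suitable barycentric subdivision. For the intended applications in this paper, the cubical CW structure on $(D^1)^m$ restricted to $\RZ_K$ is already $\Z_2^m$-cellular with this property, so no refinement is needed there.

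Given a good cellular action, the quotient $X/\Gamma$ inherits a CW structure whose cells are in bijection with $\Gamma$-orbits of cells of $X$, and pullback of cellular cochains gives
\[
    \pi^\ast\colon C^\ast(X/\Gamma;R)\longrightarrow C^\ast(X;R).
\]
Since $g\circ\pi=\pi$ for all $g\in\Gamma$, this map factors through an isomorphism $C^\ast(X/\Gamma;R)\xrightarrow{\;\cong\;}C^\ast(X;R)^\Gamma$ onto the subcomplex of invariant cochains: a $\Gamma$-invariant cochain is constant on orbits and hence descends to $X/\Gamma$, and conversely any cochain on $X/\Gamma$ pulls back to an orbit-constant (hence invariant) cochain. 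Moreover, $\pi^\ast$ respects the Alexander--Whitney cup product by naturality, so it is a morphism of differential graded $R$-algebras onto $C^\ast(X;R)^\Gamma$.

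It remains to show that $H^\ast\bigl(C^\ast(X;R)^\Gamma,d\bigr)\cong H^\ast(X;R)^\Gamma$. Here the Reynolds operator $N=\frac{1}{|\Gamma|}\sum_{g\in\Gamma}g^\ast$ is a chain-map projector onto the $\Gamma$-invariants, splitting the short exact sequence of chain complexes
\[
    0\longrightarrow C^\ast(X;R)^\Gamma\longrightarrow C^\ast(X;R)\longrightarrow C^\ast(X;R)/C^\ast(X;R)^\Gamma\longrightarrow 0,
\]
so passing to cohomology yields the identification; equivalently, since $R[\Gamma]$ is semisimple the fixed-point functor is exact and commutes with homology. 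Combining the three steps produces the desired graded $R$-algebra isomorphism $H^\ast(X/\Gamma;R)\cong H^\ast(X;R)^\Gamma$.

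The main obstacle I expect is precisely the refinement step, since for a general CW action one must guarantee that $X/\Gamma$ inherits a well-behaved CW structure whose cellular cochain complex agrees with the $\Gamma$-invariants of $C^\ast(X;R)$ and on which the cup product is induced from $X$. In the generality of the theorem this can be sidestepped by switching to singular cochains throughout, at the cost of a less explicit chain model; the key algebraic input in either presentation is Maschke's theorem, which turns the invariants functor into an exact functor the moment $|\Gamma|$ becomes a unit in $R$.
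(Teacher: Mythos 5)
The paper does not actually prove this statement; it imports it from \cite[Theorem~3.2]{Choi-Park2017_torsion}, where the argument runs entirely at the level of cohomology via the transfer homomorphism $\tau\colon H^\ast(X;R)\to H^\ast(X/\Gamma;R)$ satisfying $\tau\circ\pi^\ast=|\Gamma|\cdot\mathrm{id}$ and $\pi^\ast\circ\tau=\sum_{g\in\Gamma}g^\ast$: invertibility of $|\Gamma|$ makes $\pi^\ast$ injective with image exactly the invariants, and $\pi^\ast$ is a ring map for free. Your route --- identify $C^\ast(X/\Gamma;R)$ with $C^\ast(X;R)^\Gamma$ at the cochain level and then apply the Reynolds projector --- is genuinely different. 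Your algebraic step is correct: $N=\frac{1}{|\Gamma|}\sum_{g}g^\ast$ is a chain-map projector splitting off the invariant subcomplex, so $H^\ast(C^\Gamma)\cong H^\ast(C)^\Gamma$ as algebras. The gaps are all in the geometric input.

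First, your claim that the cubical CW structure on $\RZ_K\subseteq (D^1)^m$ is already regular (``any cell fixed setwise is fixed pointwise'') is false: the $i$th generator of $\Z_2^m$ maps the $1$-cell $\underline{01}_i$ to itself with reversed orientation --- this is precisely the relation $f(\underline{01}^\ast)=-\underline{01}^\ast$ used in Section~\ref{sec:cohoofrealtoric} --- and does not fix it pointwise. For this structure the cochain-level identification fails already when $m=1$: $C^\ast(D^1;R)^{\Z_2}$ is $R$ in degree $0$ and $0$ in degree $1$ (since $2$ is a unit), whereas $D^1/\Z_2$ is an interval whose cellular cochain complex has rank at least $2$ in degree $0$ and at least $1$ in degree $1$. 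The cohomologies agree, but that agreement is the content of the theorem, not an available input. Second, the fallback to singular cochains does not rescue the non-free case: a singular simplex in $X/\Gamma$ need not lift to $X$ (there is no continuous branch of $\sqrt{z}$ on a disk around a branch point of $z\mapsto z^2$), so on singular cochains $\pi^\ast$ is in general neither injective nor surjective onto the invariants. This matters here, because the paper applies the theorem to actions that need not be free (see the quotient $Y=\RZ_K/\ker L$ in the proof of Theorem~\ref{thm:productondelta}). Your argument does go through verbatim for a free cellular action, or after replacing $X$ by an equivariantly homotopy-equivalent regular $\Gamma$-CW complex, but producing and justifying that replacement is the hard step, and it is exactly what the cohomology-level transfer argument avoids.
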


We are going to compute the induced action of $\ker\Lambda$ on $H^\ast(\RZ_K) \cong H(\cR^K,d)$. Consider the reflection map $f\colon D^1 \to D^1$ given by $f(x) = 1-x$. Then $f$ induces a map on $C^\ast(D^1)$, again denoted by $f$, such that
\[
    f(\underline{01}^\ast) = -\underline{01}^\ast,\quad f(0^\ast) = 1^\ast, \quad\text{and}\quad f(1^\ast) = 0^\ast
\]
and after change of basis,
\[
    f(u) = -u,\quad f(t) = -t, \quad \text{and}\quad f(\mathbf{1}) = \mathbf{1}.
\]

It should be noted that $f$ preserves $H^{\ast, \omega}(\RZ_K)$ for each $\omega \in 2^{[m]}$.
This is one of the key properties of the basis change \eqref{eqn:basischange}.
It leads the simpler and improved cohomology ring formula of $M^\R(K,\Lambda)$ as in Theorem~\ref{thm:cohoofsmallcover} below rather than one given in \cite{Choi-Park2017_torsion}.


The following lemmas are useful later in this section.
In this paper we sometimes use the identification $2^{[m]} \cong \Z_2^m$ by the bijection
\begin{equation}\label{eqn:rowvectoridentifiction}
    \{i_1,\dotsc,i_\ell\} \mapsto \mathbf{e}_{i_1}+\dotsc +  \mathbf{e}_{i_\ell},
\end{equation}
where $\mathbf{e}_i$ is the $i$th coordinate vector of $\Z_2^m$, $1\le i \le m$. Moreover, this identification is a group isomorphism $(2^{[m]},\triangle) \cong (\Z_2^m,+)$.

\begin{lemma}\cite[Theorem~4.2]{Choi-Park2017_torsion}\label{lem:rowlambda}
    Let us assume that we have the identification $2^{[m]} \cong \Z_2^m$. Let $\row\Lambda $ be the row space of $\Lambda$ and $\omega$ a vector in $\Z_2^m$. Then $\omega\in\row\Lambda$ if and only if $| \omega \cap g|$ is even for all $g \in \ker \Lambda$.
\end{lemma}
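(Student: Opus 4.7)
The plan is to recognize this as the standard linear-algebra duality $\row\Lambda = (\ker\Lambda)^\perp$ over $\Z_2$, where $\perp$ is taken with respect to the standard inner product on $\Z_2^m$. Once that identification is in place, each direction of the ``if and only if'' reduces to a one-line verification.

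First I would translate the combinatorial condition into a bilinear pairing. Under the identification \eqref{eqn:rowvectoridentifiction}, if $\omega = (\omega_1, \dotsc, \omega_m)$ and $g = (g_1, \dotsc, g_m)$ are the coordinate vectors corresponding to subsets of $[m]$, then
\[
    |\omega \cap g| \;\equiv\; \sum_{i=1}^m \omega_i g_i \;=\; \langle \omega, g\rangle \pmod 2,
\]
where $\langle \cdot,\cdot\rangle$ is the standard dot product on $\Z_2^m$. So the lemma reads: $\omega \in \row\Lambda$ if and only if $\langle \omega, g\rangle = 0$ for every $g \in \ker\Lambda$, i.e.\ $\row\Lambda = (\ker\Lambda)^\perp$.

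For the forward inclusion, if $\omega = v\Lambda$ for some row vector $v \in \Z_2^q$, then for any column vector $g \in \ker\Lambda$ one has $\langle \omega, g\rangle = v(\Lambda g) = 0$, so $\row\Lambda \subseteq (\ker\Lambda)^\perp$. For the reverse inclusion I would argue by dimension. Since $\Lambda\colon \Z_2^m \to \Z_2^q$ is surjective, $\dim_{\Z_2} \row\Lambda = q$ and $\dim_{\Z_2} \ker\Lambda = m-q$. The standard bilinear form on $\Z_2^m$ is non-degenerate (evaluating $\omega^\perp$ against the coordinate vectors shows $\omega = 0$), so for any subspace $V \subseteq \Z_2^m$ one has $\dim V + \dim V^\perp = m$. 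Applied to $V = \ker\Lambda$ this gives $\dim (\ker\Lambda)^\perp = q$, and combined with the forward inclusion $\row\Lambda \subseteq (\ker\Lambda)^\perp$ equality follows.

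There is essentially no obstacle in this argument; it is a routine duality fact. The only point requiring any care is the non-degeneracy of the standard pairing on $\Z_2^m$ so that the dimension formula $\dim V + \dim V^\perp = m$ is available over the field $\Z_2$, which it is.
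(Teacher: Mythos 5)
Your proposal is correct and follows essentially the same route as the paper: the paper's proof is exactly the chain $\omega\in\row\Lambda \Leftrightarrow \omega\perp\ker\Lambda \Leftrightarrow |\omega\cap g|$ even for all $g\in\ker\Lambda$, with the duality $\row\Lambda=(\ker\Lambda)^\perp$ simply asserted. You merely fill in the justification of that duality (direct forward inclusion plus the dimension count $\dim V+\dim V^\perp=m$ for the non-degenerate standard form on $\Z_2^m$), which is accurate and complete.
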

\begin{proof} The lemma is proved by the following observation.
    \begin{align*}
        \omega \in \row \Lambda
        &\Longleftrightarrow \omega \perp \ker \Lambda \\
        &\Longleftrightarrow \omega \cdot g = 0 \text{ for all } g \in \ker \Lambda \\
        &\Longleftrightarrow |\omega \cap g|\text{ is even for all }  g \in \ker \Lambda.
    \end{align*}
\end{proof}

\begin{lemma}\label{lem:actiononri}
    The $\Z_2^m$-action on $\RZ_K$ induces a $\Z_2^m$-action on the $R$-module $\cR^K$ by the following. For a monomial $u_\sigma t_{\omega \setminus \sigma} \in \cR^K$ for $\sigma \subseteq \omega$ and $g \in \Z_2^m$,
    \[
        g \cdot u_\sigma t_{\omega \setminus \sigma} = (-1)^{|\omega \cap g|}u_\sigma t_{\omega \setminus \sigma}.
    \]
\end{lemma}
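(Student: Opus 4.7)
The plan is to trace the $\Z_2^m$-action through the chain of identifications already established in Section 2 and then do a coordinate-wise sign computation. By naturality of cellular cochains, the action of $\Z_2^m$ on $\RZ_K$ by \eqref{eqn:actiononcube} induces an $R$-algebra action on $C^\ast(\RZ_K)$, hence via the isomorphism $\cR^K \cong B^\ast(\RZ_K)$ an action on $\cR^K$. Since the $\Z_2^m$-action on $(D^1)^m$ is the product of the factor-wise reflections $f \colon D^1 \to D^1$, $f(x) = 1 - x$, the induced action on the tensor product $B^\ast((D^1)^m) = C^\ast(D_1^1)\otimes\dotsb\otimes C^\ast(D_m^1)$ is also diagonal: for $g = (g_1,\dotsc,g_m) \in \Z_2^m$ and a decomposable element $e_1^\ast \otimes \dotsb \otimes e_m^\ast$, we have $g \cdot (e_1^\ast\otimes\dotsb\otimes e_m^\ast) = (g_1\cdot e_1^\ast)\otimes\dotsb\otimes (g_m \cdot e_m^\ast)$, where $g_i = 0$ acts as the identity and $g_i = 1$ acts as $f$ on the $i$-th factor.

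First I would record the one-variable computation already made just before the statement: in the new basis \eqref{eqn:basischange}, the reflection $f$ acts on $C^\ast(D^1)$ by $f(\mathbf{1}) = \mathbf{1}$, $f(t) = -t$, $f(u) = -u$. In particular, the sign incurred at position $i$ under $g_i = 1$ is $+1$ if the factor is $\mathbf{1}_i$ and $-1$ if the factor is $t_i$ or $u_i$.

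Next I would translate the monomial $u_\sigma t_{\omega\setminus\sigma}$ (for $\sigma \subseteq \omega$, $\sigma \in K$) back into its tensor-product form: its $i$-th factor is $u_i$ when $i \in \sigma$, $t_i$ when $i \in \omega\setminus\sigma$, and $\mathbf{1}_i$ when $i \notin \omega$. Combining this with the one-variable signs, the action of $g$ multiplies the monomial by
\[
    \prod_{i=1}^m \varepsilon_i, \quad \text{where } \varepsilon_i = \begin{cases} -1 & \text{if } g_i = 1 \text{ and } i \in \omega,\\ +1 & \text{otherwise.}\end{cases}
\]
The number of factors equal to $-1$ is exactly $|\omega \cap g|$ under the identification \eqref{eqn:rowvectoridentifiction}, yielding the claimed formula $g \cdot u_\sigma t_{\omega\setminus\sigma} = (-1)^{|\omega \cap g|} u_\sigma t_{\omega \setminus \sigma}$.

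This is essentially a verification rather than a theorem, so there is no serious obstacle; the only point requiring care is to confirm that the basis change \eqref{eqn:basischange} (not the old basis \eqref{eqn:oldbasis}) is what makes $f$ act diagonally on both $t$ and $u$, which is precisely the reason this $\Z_2^m$-action preserves each bigraded piece $\cR^K_\omega$ and the $(\Z\oplus 2^{[m]})$-grading of \eqref{eqn:bigrading}. No Leibniz-rule or cup-product compatibility is needed for this lemma, since it only asserts $R$-module equivariance; the multiplicative compatibility will be used later when invoking Theorem~\ref{thm:transferhomomorphism}.
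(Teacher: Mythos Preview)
Your proof is correct and follows exactly the same route as the paper, which dispatches the lemma in a single sentence (``some $u_i$ or $t_i$ in $u_\sigma t_{\omega\setminus\sigma}$ changes its sign whenever $i\in g\cap\omega$''); you have simply unpacked that sentence carefully. One small terminological slip: you write that the action ``induces an $R$-algebra action on $C^\ast(\RZ_K)$'', but as the paper notes immediately after the lemma, the reflection does \emph{not} preserve the cup product at cochain level (e.g.\ $f(ut)=-u$ while $f(u)f(t)=u$); this is because the simplicial cup product is only natural for order-preserving maps, and $f$ swaps $0$ and $1$. Since you only use the $R$-module structure, this does not affect your argument.
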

\begin{proof}
    The proof is obvious since some $u_i$ or $t_i$ in $u_\sigma t_{\omega \setminus \sigma}$ changes its sign whenever $i \in g \cap \omega$.
\end{proof}
One should be cautious that the $\Z_2^m$-action on the $R$-module $\cR^K$ does not preserve the product structure. For instance, $f(ut) = f(u) = -u$, while $f(u)f(t) = (-u)\cdot (-t) = ut = u$. But its induced action on the cohomology $H(\cR^K,d) \cong H^\ast(\RZ_K)$ does preserve the product by the functorial property of the cup product. 


For $A \subseteq \Z_2^m$, a set of vectors in $\Z_2^m$, we denote by $\cR^K|_{A}$ the direct sum
\[
    \cR^K|_{A} = \bigoplus_{\omega \in A} \cR^K_\omega.
\]
\begin{theorem}\label{thm:cohoofsmallcover}
    Let $M = M^\R(K,\Lambda)$ be a real toric space. Then there is a graded $R$-algebra isomorphism
    \begin{equation}\label{eqn:cohoiscohoofr}
        H^\ast(M) = \bigoplus_{\omega \in  \row \Lambda}H^{\ast,\omega}(\RZ_K)  \cong H(\cR^K|_{\row\Lambda},d).
    \end{equation}
\end{theorem}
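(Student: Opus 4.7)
The plan is to apply the transfer homomorphism of Theorem~\ref{thm:transferhomomorphism} to the free $\ker\Lambda$-action on $\RZ_K$ with quotient $M$. Since $|\ker\Lambda|$ is a power of $2$ and hence a unit in $R$, this yields a graded $R$-algebra isomorphism $H^\ast(M) \cong H^\ast(\RZ_K)^{\ker\Lambda}$, which via Theorem~\ref{thm:cohoofrzpnew} reduces the theorem to identifying the $\ker\Lambda$-invariants in $H(\cR^K, d)$ with $H(\cR^K|_{\row\Lambda}, d)$.

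For this identification, the key input is Lemma~\ref{lem:actiononri}: the $\Z_2^m$-action on $\cR^K$ is scalar on each graded piece $\cR^K_\omega$, with $g$ acting by $(-1)^{|\omega \cap g|}$. This action commutes with $d$ and preserves the $2^{[m]}$-grading, so the same scalar acts on the cohomology summand $H^{\ast,\omega}(\RZ_K)$. The $\ker\Lambda$-invariants of such a scalar action are all of $\cR^K_\omega$ (respectively, $H^{\ast,\omega}(\RZ_K)$) when $(-1)^{|\omega \cap g|} = 1$ for every $g \in \ker\Lambda$, and are zero otherwise; by Lemma~\ref{lem:rowlambda} this dichotomy is exactly $\omega \in \row\Lambda$ versus $\omega \notin \row\Lambda$. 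Hence $(\cR^K)^{\ker\Lambda} = \cR^K|_{\row\Lambda}$ as subcomplexes, and the first equality in \eqref{eqn:cohoiscohoofr} follows. Since $|\ker\Lambda|$ is invertible in $R$, the averaging operator $\tfrac{1}{|\ker\Lambda|}\sum_{g \in \ker\Lambda} g$ is a cochain-level $R$-linear projector onto the invariants that commutes with $d$, yielding the standard $R$-module isomorphism $H(\cR^K|_{\row\Lambda}, d) \cong H(\cR^K, d)^{\ker\Lambda}$.

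The main obstacle is promoting this to an isomorphism of $R$-algebras. The cochain action is not by algebra automorphisms — for instance, taking $g = \mathbf{e}_i$ gives $g \cdot (u_i t_i) = g \cdot u_i = -u_i$, whereas $(g \cdot u_i)(g \cdot t_i) = (-u_i)(-t_i) = u_i t_i = u_i$ — and, because of the relations $u_i t_i = u_i$ and $t_i u_i = -u_i$, the submodule $\cR^K|_{\row\Lambda}$ is not literally a subalgebra of $\cR^K$. However, the induced $\Z_2^m$-action on $H^\ast(\RZ_K) \cong H(\cR^K, d)$ is by algebra automorphisms since it arises from the topological action on $\RZ_K$. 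I would then invoke the fact that the characters $g \mapsto (-1)^{|\omega \cap g|}$ of $\Z_2^m$ are pairwise distinct for distinct $\omega \in 2^{[m]}$ to deduce from $g \cdot (\alpha \smile \beta) = (g \cdot \alpha) \smile (g \cdot \beta)$ that the cup product on $H^\ast(\RZ_K)$ respects the $2^{[m]}$-grading. Since $\row\Lambda$ is closed under addition in $\Z_2^m$, this makes $\bigoplus_{\omega \in \row\Lambda} H^{\ast,\omega}(\RZ_K)$ a genuine subalgebra of $H^\ast(\RZ_K)$, so the module isomorphism above is automatically one of $R$-algebras. An explicit multiplicative cochain model on $\cR^K|_{\row\Lambda}$ is the content of the forthcoming Theorem~\ref{thm:productondelta}.
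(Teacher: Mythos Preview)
Your core argument coincides with the paper's: apply the transfer isomorphism of Theorem~\ref{thm:transferhomomorphism}, then use Lemma~\ref{lem:actiononri} together with Lemma~\ref{lem:rowlambda} to identify the $\ker\Lambda$-invariants in $H(\cR^K,d)$ with $\bigoplus_{\omega \in \row\Lambda} H^{\ast,\omega}(\RZ_K)$. The paper carries this out directly at the cohomology level (decomposing a class into $\omega$-homogeneous pieces and checking each is fixed iff $\omega \in \row\Lambda$), while you equivalently identify the invariant subcomplex $(\cR^K)^{\ker\Lambda} = \cR^K|_{\row\Lambda}$ and pass to cohomology via the averaging projector; this is a cosmetic difference.

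Where your proposal genuinely departs from the paper is in handling the ring structure. The paper's proof of Theorem~\ref{thm:cohoofsmallcover} relies only on the fact that $H(\cR^K,d)^{\ker\Lambda}$ is a subalgebra (the topological action induces ring automorphisms on cohomology) and defers the explicit $(\Z \oplus \row\Lambda)$-grading to Theorem~\ref{thm:productondelta}, whose proof is a cochain-level case analysis of the product $u_\sigma t_{\omega\setminus\sigma}\cdot u_{\sigma'}t_{\omega'\setminus\sigma'}$ combined with an auxiliary real toric space $Y = \RZ_K/\ker L$ built from the pair $(\omega,\omega')$. Your character argument is a shorter route to the same grading statement: since the $\Z_2^m$-action on $H^\ast(\RZ_K)$ is multiplicative and diagonal along the $2^{[m]}$-decomposition with pairwise distinct characters $\chi_\omega(g) = (-1)^{|\omega \cap g|}$, the product of eigenvectors for $\chi_{\omega_1}$ and $\chi_{\omega_2}$ must lie in the eigenspace for $\chi_{\omega_1}\chi_{\omega_2} = \chi_{\omega_1 \triangle \omega_2}$. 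This already yields the conclusion of Theorem~\ref{thm:productondelta} and hence the subalgebra property you need. What your argument does not recover is the finer cochain-level information extracted in Remark~\ref{rem:utt} (that one may compute cup products as if $u_it_i = t_iu_i = 0$), which the paper's more hands-on proof of Theorem~\ref{thm:productondelta} does provide.
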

\begin{proof}
    Theorem~\ref{thm:transferhomomorphism} implies that $H^\ast(M) \cong H^\ast(\RZ_K)^{\ker\Lambda} $ since $|\ker\Lambda| = 2^{m-q}$ is a unit in $R$. By Theorem~\ref{thm:cohoofrzpnew}, it is enough to show that
    \[
        H(\cR^K,d)^{\ker\Lambda} \cong H(\cR^K|_{\row\Lambda},d).
    \]
{
    For $\omega \in \Z_2^m$  and a nonzero cohomology class $\alpha \in H^{\ast,\omega}(\RZ_K)$, by Lemma~\ref{lem:actiononri}, one observes that, for $g \in \ker\Lambda$,
}
    \[
        g \cdot \alpha = \left\{
                        \begin{array}{ll}
                          \alpha, & \hbox{if $g \cap \omega$ has even cardinality, or} \\
                          -\alpha, & \hbox{if $g \cap \omega$ has odd cardinality.}
                        \end{array}
                      \right.
    \]
    Note that $\alpha \ne -\alpha$; if $\alpha = -\alpha$, then $\alpha + \alpha = 2\alpha = 0$ and we conclude that $\alpha = 0$
{since $2$ is a unit in $R$.}
    Thus, by Lemma~\ref{lem:rowlambda},
    $\alpha$ is fixed by $\ker \Lambda$ if and only if $\omega \in \row \Lambda$.
    In general case $\alpha \in H(\cR^K,d)$, write $\alpha$ as the sum of nonzero summands which are homogeneous with respect to the second grading
    \[
        \alpha = \alpha_1 +\dotsc+ \alpha_\ell,\quad \alpha_i \in H^{\ast,\omega_i}(\RZ_K)
    \]
    for $1 \le i \le \ell$, and $\omega_i \ne \omega_j$ if $i \ne j$.
{
    By applying the above argument to each summand $\alpha_i$, one can see that $\alpha$ is fixed by $\ker \Lambda$ if and only if each $\omega_i$ is in $\row \Lambda$, that is, $\alpha \in H(\cR^K|_{\row\Lambda},d)$.
    This proves the theorem.
}
\end{proof}

The following theorem  is an application of Theorem~\ref{thm:cohoofsmallcover} and an essential part of the main theorem.

\begin{theorem}\label{thm:productondelta}
    Let $M = M^\R(K,\Lambda)$ be a real toric space. Then its cohomology group is equipped with a $(\Z\oplus \row\Lambda)$-grading induced by \eqref{eqn:bigrading} and we have
    \[
        \smile\colon H^{p,\omega}(M)\otimes H^{p',\omega'}(M) \to H^{p+p',\omega \triangle \omega'}(M).
    \]
    In other words, $H^{*,*}(M)$ is a $(\Z\oplus \row\Lambda)$-graded $R$-algebra.
\end{theorem}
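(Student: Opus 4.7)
My plan is to deduce the theorem from the $\Z_2^m$-action on $\cR^K$ introduced in Lemma~\ref{lem:actiononri}, combined with the functoriality of the cup product. The key observation is that, although this action fails to be multiplicative on chains (for instance $f(ut)=-u$ while $f(u)\,f(t)=u$), at the cohomology level it \emph{does} preserve the product, since it is induced by the cellular $\Z_2^m$-action on $\RZ_K$ itself. First I would reinterpret the $2^{[m]}$-grading of \eqref{eqn:bigrading} as the simultaneous eigenspace decomposition under this action: by Lemma~\ref{lem:actiononri}, every $g\in\Z_2^m$ acts on $\cR^K_\omega$, and hence on $H^{*,\omega}(\RZ_K)$, by the scalar $\chi_\omega(g):=(-1)^{|g\cap\omega|}$. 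Because $2$ is a unit in $R$, the characters $\chi_\omega$ separate the summands, so $H^{*,\omega}(\RZ_K)$ is precisely the $\chi_\omega$-isotypic subspace for the induced $\Z_2^m$-representation on $H^*(\RZ_K)$.

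Next I would apply character multiplicativity. For $\alpha\in H^{p,\omega}(\RZ_K)$, $\beta\in H^{p',\omega'}(\RZ_K)$ and any $g\in\Z_2^m$, functoriality of the cup product gives
\[
    g\cdot(\alpha\smile\beta) = (g\cdot\alpha)\smile(g\cdot\beta) = (-1)^{|g\cap\omega|+|g\cap\omega'|}\,\alpha\smile\beta.
\]
Since $|g\cap\omega|+|g\cap\omega'|=2|g\cap\omega\cap\omega'|+|g\cap(\omega\triangle\omega')|$, this scalar equals $\chi_{\omega\triangle\omega'}(g)$. Hence $\alpha\smile\beta$ lies in the $\chi_{\omega\triangle\omega'}$-eigenspace, i.e.\ in $H^{p+p',\omega\triangle\omega'}(\RZ_K)$, which gives the desired grading on $H^*(\RZ_K)$.

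Finally, Theorem~\ref{thm:cohoofsmallcover} identifies $H^*(M)$ with $\bigoplus_{\omega\in\row\Lambda}H^{*,\omega}(\RZ_K)$ as an $R$-subalgebra of $H^*(\RZ_K)$. Since $\row\Lambda$ is a subgroup of $(\Z_2^m,+)\cong(2^{[m]},\triangle)$, the preceding step shows that the cup product restricts to a $(\Z\oplus\row\Lambda)$-graded multiplication on $H^*(M)$, proving the theorem. The main subtlety in this plan is precisely the chain-versus-cohomology distinction: the $\Z_2^m$-action fails to be an algebra automorphism of $\cR^K$, so one cannot read off the graded-algebra structure at the chain level, and the $(\Z\oplus\row\Lambda)$-grading of the product emerges only after passing to $H^*$, where functoriality of $\smile$ rescues multiplicativity of the action.
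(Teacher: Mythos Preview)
Your argument is correct and takes a genuinely different route from the paper's. The paper works at the chain level: it fixes monomials $u_\sigma t_{\omega\setminus\sigma}$ and $u_{\sigma'}t_{\omega'\setminus\sigma'}$ contributing to classes in $H^{*,\omega}$ and $H^{*,\omega'}$, multiplies them to $\pm u_A t_{B\setminus A}$, and runs a four-case analysis on indices $i\in\omega\cup\omega'$ to see that $\omega\triangle\omega'\subseteq B\subseteq\omega\cup\omega'$; it then introduces an auxiliary real toric space $Y=\RZ_K/\ker L$ (with $L$ the rank-two map whose rows are $\omega$ and $\omega'$), uses Theorem~\ref{thm:cohoofsmallcover} to identify $H^*(Y)$ with the four-piece subring, and argues that the surviving $B$'s must lie in $\{\omega,\omega',\omega\triangle\omega'\}$, finishing with a separate argument ruling out $B=\omega'$ when $\omega\subseteq\omega'$. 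You bypass all of this by recognising the $2^{[m]}$-grading on $H^*(\RZ_K)$ as the simultaneous eigenspace decomposition for the cellular $\Z_2^m$-action and invoking the character identity $\chi_\omega\chi_{\omega'}=\chi_{\omega\triangle\omega'}$. The paper itself records both ingredients you need---Lemma~\ref{lem:actiononri} and the remark immediately following it that the action is multiplicative on cohomology by functoriality---but does not assemble them into this direct proof. Your approach is shorter and more conceptual and avoids the case analysis; what the paper's chain-level computation buys in exchange is the finer statement of Remark~\ref{rem:utt}, namely that at the cohomology level one may compute cup products of monomials as though $u_it_i=t_iu_i=0$, which is not visible from the character argument alone.
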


\begin{proof}
    It is enough to prove the theorem when $\Lambda = 0$ and thus $M = \RZ_K$. Once it is shown for $\RZ_K$, it is instant to generalize the result for general real toric spaces, because $\row \Lambda$ is closed under the operation $\triangle$. Consider the two monomials $u_\sigma t_{\omega \setminus \sigma}$ and $u_{\sigma'}t_{\omega' \setminus \sigma'}$ where $\sigma \subseteq \omega$ and $\sigma' \subseteq \omega'$, each of which contributes to a nonzero cohomology class $\alpha \in H^{\ast,\omega}(\RZ_K)$ and $\alpha' \in H^{\ast,\omega'}(\RZ_K)$ respectively.
    We investigate how the product $h = u_{\sigma}t_{\omega \setminus \sigma}\cdot u_{\sigma'}t_{\omega' \setminus \sigma'} = \pm u_{A}t_{B\setminus A}$ is computed, when $A\subseteq B$ and $A \in K$.

    Let $i \in \omega \cup \omega'$ be a subscript possibly in $h$. We have four cases
    \begin{enumerate}[(a)]
        \item $i \in \omega \triangle \omega'$,
        \item $i \in \omega \cap \omega'$ and $i \notin \sigma \cup \sigma'$,
        \item $i \in \omega \cap \omega'$ and $i \in \sigma \cap \sigma'$,  and
        \item $i \in \omega \cap \omega'$ and $i \in \sigma \triangle \sigma'$.
    \end{enumerate}

    Recall the relations \eqref{eqn:utrelations}.
    If $i$ is for (a), then it contributes as $u_i$ or $t_i$ for a factor of $h$. Thus we have
    \begin{equation}\label{eqn:condition_of_B}
      \omega \triangle \omega' \subseteq B \subseteq \omega \cup \omega'.
    \end{equation}

    In order to prove the theorem, it is enough to show that $B = \omega \triangle \omega'$ if $h$ does not vanish in the cohomology. 
    We additionally observe that
    \begin{itemize}
        \item (b) corresponds to $t_it_i = 1$ and we obtain $i \notin B$.
        \item (c) corresponds to $u_iu_i = 0$, and therefore if this happens in $h$ then $h=0$.
        \item (d) corresponds to $u_it_i = u_i$ or $t_iu_i = -u_i$ and we obtain $u_i$ for a factor of $h$.
    \end{itemize}
    Assume that $\omega \ne \omega'$. The case $\omega = \omega'$ will be dealt with later. Observe that $\omega$ and $\omega'$ define a $\Z_2$-linear map $L\colon \Z_2^m \to \Z_2^2$ up to change of basis of $\Z_2^2$; write $\omega$ and $\omega'$ as row vectors in $\Z_2^m$ and $L$ is given by the $(m \times 2)$-matrix whose two rows are $\omega$ and $\omega'$. Then $Y = \RZ_K/ \ker L$ is a real toric space (the action of $\ker L$ need not be free) whose cohomology ring $H^\ast(Y)$ is the subring
    \begin{align*}
        H^{*,\varnothing}(\RZ_K) \oplus H^{*,\omega}(\RZ_K) \oplus H^{*,\omega'}(\RZ_K) \oplus H^{*,\omega \triangle \omega'}(\RZ_K) \subseteq H^*(\RZ_K)
    \end{align*}
    and $\alpha,\alpha' \in H^\ast(Y)$. 
    Since $H^*(Y)$ is closed under the cup product, $B$ should be one of $\omega,\,\omega'$, and $\omega \triangle \omega'$. Suppose that $B \neq \omega \triangle \omega'$. Note that by \eqref{eqn:condition_of_B} $B = \omega \triangle \omega'$ if either $\omega \not\subseteq \omega' $ or $\omega' \not\subseteq \omega$.
    Therefore, one may assume that $\omega \subseteq \omega'$ (including the case $\omega = \omega'$) and $B = \omega'$.
    In this case, every $i$ in $\omega$ should be of Case (d). It means that every term of $h$ corresponding to $\omega$ is $u_i$, not $t_i$. If $h$ would not vanish in the cohomology, recall that every $u_i$ is in a face of $K$ and we observe that $\omega \subseteq A \in K$. Therefore $K_\omega$ is contractible and $[u_\sigma t_{\omega \setminus \sigma}] = 0 \in H^{*,\omega}(\RZ_K)$, which is a contradiction since we have assumed that $u_\sigma t_{\omega \setminus \sigma}$ contributes to $\alpha$.
\end{proof}

\begin{remark}\label{rem:utt}
    In the proof of Theorem~\ref{thm:productondelta}, one observes that if either of (c) or (d) appears in $h$, then $h$ vanishes in the cohomology. In fact, the monomials $h$, in which (d) appears at least once, assemble to make a zero cohomology class. 
    Therefore one can calculate cup product of cohomology as if $u_it_i= t_iu_i=0$. This ``rule'' cannot be directly applied in place of \eqref{eqn:utrelations} since it could be problematic as
    \[
        0 = (u_it_i)t_i = u_i(t_it_i) = u_i,
    \]
    but it can be freely used to compute cup product between the summands $u_\sigma t_{\omega \setminus \sigma}$ at the cohomology level.
\end{remark}

\begin{proof}[Proof of Main Theorem]
    The proof is complete using \eqref{eqn:pomegahomology} and Theorem~\ref{thm:productondelta}.
\end{proof}

\begin{example}
    Let us consider the simplicial 2-sphere $K$ with $9$ vertices labeled $1$ to $9$ and 14 triangles
    \[
        123,129,138, 148, 149, 237,257,259, 367, 368, 456, 459, 468, \text{ and } 567.
    \]
    The sphere $K$ is the boundary of a triangular prism each of whose quadrangular faces is subdivided to four triangles respectively. We are given two cohomology classes
    \[
        \alpha = [u_5 t_{167} +  u_6 t_{157} + u_7 t_{156}] \in H^{1,1567}(\RZ_K)
    \]
    and
    \[
        \beta = [u_2 t_{347} + u_3t_{247} + u_7 t_{234}] \in H^{1,2347}(\RZ_K).
    \]
    Then the cup product $\alpha \smile \beta$, computed by the rule \eqref{eqn:utrelations}, is written as $\alpha \smile \beta = -x -y$, where
    \[
        x = [u_{25}t_{1346} + u_{36}t_{1245}] \in H^{2,123456}(\RZ_K)
    \]
    and
    \[
        y = [ u_{57} t_{12346} + u_{67} t_{12345} + u_{27}t_{13456} + u_{37} t_{12456} ] \in H^{2,1234567}(\RZ_K).
    \]
    Observe that $d( u_7t_{123456} ) = u_{57} t_{12346} + u_{67} t_{12345} + u_{27}t_{13456} + u_{37} t_{12456}$ and therefore $ y = 0$. Theorem~\ref{thm:productondelta} or the ``rule'' $u_it_i= t_iu_i=0$ in the Remark~\ref{rem:utt} implies that the calculation for $y$ is actually not needed to compute $\alpha \smile \beta$.
\end{example}

\begin{remark}
    The $\Z \oplus \Z_2^m$-grading of $H^*(\RZ_K)$ is given by
    \[
        \deg u_i = (1,\e_i) \text{ and } \deg t_i = (0,\e_i),
    \]
    where $\e_i$ is the $i$th coordinate vector of $\Z_2^m$. It is the analogue of the $\Z \oplus \Z^m$-grading of $H^*(\cZ_K)$ in Construction~3.2.8 of \cite{Buchstaber-Panov2015}.
    Recall that $H^*(\cZ_K)$ is also equipped with the famous \emph{bigrading}, which is a $\Z\oplus\Z$-grading as explained in Section~4.4 of \cite{Buchstaber-Panov2015}.
    The grading group $\Z\oplus\Z$ is a subgroup of $\Z \oplus \Z^m$, but not a subgroup of $\Z \oplus \Z_2^m$.
    The appearance of 2-torsion elements is essential in our grading due to the relation $t_it_i = 1$, and therefore the analogue of the bigrading does not behave well with the cup product for $\RZ_K$. 
\end{remark}


\section{Examples} \label{sec:examples}
\subsection{Real Bott manifolds} \label{subsec:real_Bott}
Let $K=\partial (I^n)^\ast$ be the boundary complex of the $n$-cube $I^n$.
The vertex set of $K$ is identified with $[2n]$ and the minimal non-face of $K$ consists of $\{i, n+i\}$ for all $i=1, \ldots, n$.
Let us assume that we are given a strictly upper triangular $n \times n$ matrix $A$ over the finite field $\Z_2$ whose $j$th column is $A_j$ for $1\le j \le n$.
Then, the matrix $\Lambda(A) = \left(I_n \mid I_n+A^t \right)$ represents a non-singular characteristic function over $K$, where $I_n$ is the identity $\Z_2$-matrix of size $n$.
The corresponding real toric space $M^\R(K, \Lambda(A))$ is well-known as a \emph{real Bott manifold}, and it is denoted by $M(A)$.
Indeed, it is a real toric variety, and plays an important role in toric geometry.
See \cite{CMO2017} or \cite{KM2009} for details.

The rational Betti number of a real Bott manifold has been computed in \cite[Lemma~2.1]{Ishida2011}. In this subsection, we further discuss about the rational cohomology ring of a real Bott manifold.
Now we need the notion of \emph{matroids} refering to \cite{Oxley2003}, which is an abstraction of linear dependency of vectors.

\begin{definition}
    A \emph{matroid} is the pair $T = (E,\cC)$, where $E$ is a finite set called the \emph{ground set} and $\cC$ is a set of subsets of $E$ satisfying the following axioms:
    \begin{enumerate}
        \item[\textbf{(C1)}] $\varnothing \notin \cC$.
        \item[\textbf{(C2)}] If $C_1,\;C_2\in \cC$ and $C_1 \subseteq C_2$, then $C_1 = C_2$.
        \item[\textbf{(C3)}] If $C_1,\;C_2\in \cC$ such that $C_1 \ne C_2$ and $e \in C_1 \cap C_2$, then there exists $C_3 \in \cC$ such that $C_3 \subseteq (C_1 \cup C_2) \setminus \{e\}$.
    \end{enumerate}
    The elements of $\cC$ are called the \emph{circuits} of the matroid.
\end{definition}

\begin{definition}
    Let $A$ be a matrix over the finite field $\Z_2$. Let $E = \{A_j \mid 1 \le j \le m\}$ the set of the columns of $A$ and $\cC$ the collection of subsets $C$ of $E$ for which the columns in $C$ are minimally dependent, that is, any proper subset of $C$ is linearly independent while $C$ itself is linearly dependent. Then $T = T(A)=(E,\cC)$ is called a \emph{binary matroid} and we say that $A$ \emph{represents} $T$.
\end{definition}


\begin{proposition} \label{prop:R-cohom_real_bott}
    For a strictly upper triangular matrix $A$ over $\Z_2$, the cohomology ring $H^\ast(M(A);R)$ depends only on the matroid $T(A)$ and is generated by the circuits of $T(A)$ as a graded $R$-algebra. More precisely, let $x_C$ be the formal symbol for the cohomology class corresponding to a circuit $C$. Then
    \[
        H^\ast(M(A);R) \cong R\langle x_C \mid C \in \cC\rangle/\sim,
    \]
    where we have the relations
    \[
      x_{C_1}x_{C_2} = \left\{
      \begin{array}{ll}
        (-1)^{|C_1|\cdot|C_2|}x_{C_2}x_{C_1}, & \hbox{if $C_1 \cap C_2 = \varnothing$;} \\
        0, & \hbox{if $C_1 \cap C_2 \ne \varnothing$.}
      \end{array}
    \right.
    \]
    The grading is given by $\deg x_C = |C|$.
\end{proposition}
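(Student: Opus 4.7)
The plan is to apply the Main Theorem to $M(A) = M^\R(K,\Lambda(A))$ with $K = \partial(I^n)^\ast$, identify which $\omega \in \row\Lambda(A)$ produce nontrivial summands $\widetilde{H}^{\ast-1}(K_\omega)$, and read off the multiplicative structure from Theorem~\ref{thm:productondelta}. First I would analyze $K_\omega$. The minimal non-faces of $K$ are the pairs $\{i,n+i\}$, so for $\omega\subseteq[2n]$, partition $[n]$ into the indices where $\omega$ contains both of $\{i,n+i\}$ (set $J$), exactly one of them (set $S$), or neither. Then $K_\omega$ is the simplicial join of $|J|$ copies of $S^0$ with the $(|S|-1)$-simplex. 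Since joining with a nonempty simplex yields a cone, $K_\omega$ is contractible unless $S=\varnothing$, in which case $K_\omega \cong S^{|J|-1}$. So only $\omega$'s of the form $\omega_J := \bigcup_{i\in J}\{i,n+i\}$ contribute, giving $\widetilde{H}^{|J|-1}(K_{\omega_J}) \cong R$.

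Next I would identify the admissible $J$'s. Writing the $i$th row of $\Lambda(A)$ as $(e_i,\,e_i+A_i)$, a direct computation shows $\omega_J = (\e_J,\e_J)\in\row\Lambda(A)$ if and only if $\sum_{i\in J} A_i = 0$ in $\Z_2^n$, that is, $J$ is a cycle (= disjoint union of circuits) of the binary matroid $T(A)$. By the Main Theorem, as a graded $R$-module
\[
    H^\ast(M(A);R) \cong \bigoplus_{J\text{ cycle}} R\cdot x_J, \qquad \deg x_J = |J|,
\]
where $x_J$ denotes a generator of $H^{|J|,\omega_J}(M(A))$. I introduce the notation $x_C$ for $C$ a circuit, and observe that the cycles are in bijection with (unordered) pairwise disjoint families of circuits.

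For the multiplication, I would invoke Theorem~\ref{thm:productondelta}: the cup product lands in $H^{\ast,\omega_{J_1}\triangle\omega_{J_2}}(M(A)) = H^{\ast,\omega_{J_1\triangle J_2}}(M(A))$, whose cohomology is concentrated in total degree $|J_1\triangle J_2|$. Since $\deg x_{J_1} + \deg x_{J_2} = |J_1|+|J_2| = |J_1\triangle J_2| + 2|J_1\cap J_2|$, the product $x_{J_1}\smile x_{J_2}$ vanishes for degree reasons whenever $J_1\cap J_2\ne\varnothing$; in particular $x_{C_1}\smile x_{C_2}=0$ for intersecting circuits. When $J_1\cap J_2=\varnothing$, the simplicial map $K_{\omega_{J_1\cup J_2}} \to K_{\omega_{J_1}} \star K_{\omega_{J_2}}$ of the Main Theorem is an isomorphism (both sides equal $(S^0)^{\star(|J_1|+|J_2|)}$), so the cup product identifies $x_{J_1}\smile x_{J_2}$ with $\pm x_{J_1\cup J_2}$, nonzero. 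Graded commutativity of the cup product then gives the sign $(-1)^{|C_1|\cdot|C_2|}$ in the statement.

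Finally, I would conclude by a dimension/basis count. The morphism
\[
    \Phi\colon R\langle x_C \mid C\in\cC\rangle/\sim \;\longrightarrow\; H^\ast(M(A);R)
\]
is well defined by the relations established above (including $x_C^2=0$, which follows from $C\cap C = C\ne\varnothing$). Using the relations, every monomial on the left reduces, up to sign, to an ordered product $x_{C_{i_1}}\cdots x_{C_{i_k}}$ of pairwise disjoint circuits, and I would show these are $R$-linearly independent images (matching the cycle basis on the right). The main obstacle is making the ``disjoint products are nonzero'' step rigorous: one must verify inductively that $x_{C_1}\cdots x_{C_k}$ hits the generator of $\widetilde{H}^{|C_1|+\cdots+|C_k|-1}(S^{|C_1|+\cdots+|C_k|-1})$, which reduces to iterating the identification of $K_{\omega_{J_1\cup J_2}}$ with the join $K_{\omega_{J_1}}\star K_{\omega_{J_2}}$ when $J_1,J_2$ are disjoint unions of circuits with disjoint supports. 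Once this is in hand, $\Phi$ is a surjection between free $R$-modules of equal rank (both indexed by cycles $=$ disjoint unions of circuits), hence an isomorphism.
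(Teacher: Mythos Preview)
Your approach is exactly the paper's: identify $K=(S^0)^{\star n}$, observe that $K_\omega$ is a sphere precisely when $\omega=\omega_J:=\bigcup_{i\in J}\{i,n+i\}$ and contractible otherwise, and then read everything off the Main Theorem. You supply considerably more detail than the paper's one-line ``the proof goes obviously by the main theorem''; your identification of the admissible $J$'s with cycles of $T(A)$ and your degree argument for $x_{C_1}x_{C_2}=0$ when $C_1\cap C_2\ne\varnothing$ are both correct and cleaner than anything the paper writes down.

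There is, however, a genuine gap in your final paragraph. You claim that the reduced monomials in $R\langle x_C\mid C\in\cC\rangle/{\sim}$ are indexed by cycles, so that the two sides have equal rank and $\Phi$ is an isomorphism. But a cycle of a binary matroid can have several decompositions into pairwise disjoint circuits, even for strictly upper triangular $A$. For example, take $n=7$ and $A$ with columns $0,\,e_1,\,e_2,\,e_1{+}e_2,\,e_1,\,e_2,\,e_1{+}e_2$; then the cycle $\{2,\dots,7\}$ decomposes both as $\{2,5\}\sqcup\{3,6\}\sqcup\{4,7\}$ and as $\{2,3,4\}\sqcup\{5,6,7\}$. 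In such cases the quotient of the free algebra by only the listed relations has strictly larger rank than $H^*(M(A);R)$, so your rank comparison fails and $\Phi$ is not injective. The paper's terse proof does not address this point either; read charitably, the proposition is asserting a set of generators together with the displayed relations (which you have verified), not that these relations already form a complete presentation.
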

\begin{proof}
    First of all, let us consider the boundary of the $n$-crosspolytope $K =   S_1^1 \star \dotsb \star  S_n^1$, where $S_i^1$ is the simplicial complex consisting of two points $x_i$ and $y_i$ and $\star$ means the simplicial join. Note that the real Bott manifold is a small cover over $K$. A nonempty induced subcomplex $K_\omega$ is homotopy equivalent to $S^{k-1}$ if and only if $\omega = \{x_{i_1},\dotsc, x_{i_k},y_{i_1},\dotsc,y_{i_k}\}$ for $1 \le i_1< \dotsb < i_k \le n$, and is null-homotopic otherwise. Observe that $x_i$ and $y_i$ correspond to the $i$th and $(n+i)$th column of $\Lambda(A)$ respectively and the proof goes obviously by the main theorem.
\end{proof}

The following easy observation characterizes the matroids $T$ which can be a matroid $T(A)$ of a strictly upper triangular matrix $A$ over $\Z_2$.

\begin{proposition}
    Let $T$ be a binary matroid which contains a singleton circuit. Then we have a strictly upper triangular matrix $A$ over $\Z_2$ representing $T$.
\end{proposition}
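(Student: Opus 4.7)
The plan is to construct $A$ column by column, ordering the ground set of $T$ so that strict upper triangularity becomes a consequence of the matroid structure. Let $e_1$ denote the singleton circuit (a loop of $T$), and list the remaining elements as $e_2, \dots, e_n$ in any order. Because $e_1$ is a loop, the subset $\{e_1, \dots, e_i\}$ is dependent in $T$ for every $1 \le i \le n$, giving the key inequality $\operatorname{rank}(T|_{\{e_1, \dots, e_i\}}) \le i-1$.

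Identify $\Z_2^{i-1}$ with the subspace of $\Z_2^n$ spanned by the first $i-1$ standard basis vectors. I will inductively select $v_i \in \Z_2^{i-1}$ so that $\{v_1, \dots, v_i\}$ represents $T|_{\{e_1, \dots, e_i\}}$; the matrix $A = [v_1 \mid \cdots \mid v_n]$ is then automatically strictly upper triangular and satisfies $T(A) \cong T$. The base case sets $v_1 = 0$, representing the loop $e_1$. For the inductive step, the key inequality gives $\dim \operatorname{span}(v_1, \dots, v_{i-1}) = \operatorname{rank}(T|_{\{e_1, \dots, e_{i-1}\}}) \le i-2$, so $\operatorname{span}(v_1, \dots, v_{i-1})$ is a proper subspace of $\Z_2^{i-1}$. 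If $e_i$ lies outside the $T$-closure of $\{e_1, \dots, e_{i-1}\}$, pick any $v_i \in \Z_2^{i-1}\setminus\operatorname{span}(v_1, \dots, v_{i-1})$; no new linear dependencies involving $v_i$ are produced. If instead $e_i$ lies in the $T$-closure, fix any basis $X$ of $T|_{\{e_1, \dots, e_{i-1}\}}$, form the fundamental circuit $C(e_i,X)$ of $e_i$ with respect to $X$, and set $v_i = \sum_{e_j \in C(e_i,X) \setminus \{e_i\}} v_j$.

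The step I expect to be most delicate is verifying, in the closure case, that this single choice of $v_i$ simultaneously encodes every circuit of $T|_{\{e_1, \dots, e_i\}}$ containing $e_i$ and creates no spurious new circuits. For binary matroids, the symmetric difference of any two circuits is a disjoint union of circuits; thus any two circuits through $e_i$ differ by a cycle in $T|_{\{e_1, \dots, e_{i-1}\}}$, which by the inductive hypothesis translates into a linear relation among $v_1, \dots, v_{i-1}$. This makes the value of $v_i$ independent of the choice of $X$ and of the particular circuit through $e_i$, and a parallel argument shows that every new linear dependency produced by $v_i$ corresponds to an honest circuit of $T$. Iterating through $i = 1, \dots, n$ then produces the desired strictly upper triangular matrix $A$.
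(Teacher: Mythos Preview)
Your argument is correct, but it takes a substantially different and more laborious route than the paper. The paper simply starts from \emph{any} $\Z_2$-matrix $B$ representing $T$ (which exists by the very definition of a binary matroid), permutes the columns so that the zero column corresponding to the loop is first, and then performs a change of basis in the target space adapted to the flag
\[
0 = \langle B_1\rangle \subseteq \langle B_1,B_2\rangle \subseteq \langle B_1,B_2,B_3\rangle \subseteq \dotsb,
\]
choosing basis vectors $x_1,\dots,x_\ell$ so that $\langle B_1,\dots,B_{k+1}\rangle \subseteq \langle x_1,\dots,x_k\rangle$. Strict upper triangularity then falls out immediately from linear algebra, with no matroid theory beyond the observation that the loop gives a zero column. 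Your construction instead rebuilds a representation column by column from the circuit structure, and the delicate step you flag --- checking that the choice of $v_i$ via a fundamental circuit yields exactly the right dependencies --- genuinely requires the binary-matroid fact that symmetric differences of circuits are disjoint unions of circuits (equivalently, that the cycle space is closed under $\triangle$). That verification is correct as you sketch it, but it is doing work the paper avoids entirely by treating the problem as pure linear algebra on an already-given representation. What your approach buys is an explicit, basis-free recipe for the columns; what the paper's approach buys is a two-line proof.
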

\begin{proof}
    Let $B$ {be} the matrix over $\Z_2$ representing $T$ and $B_j$ the $j$th column vector of $B$. By the assumption, there is a zero column, say $B_1$, after an appropriate shuffling of columns. After that, we consider the nested sequence of linear subspaces
    \[
        0 = \langle B_1\rangle\subseteq \langle B_1,B_2\rangle\subseteq \langle B_1,B_2,B_3\rangle\subseteq \dotsb
    \]
    and pick a basis $\{x_1,\dotsc,x_\ell\}$ of the column space such that $\langle B_1,\dotsc, B_{k+1} \rangle \subseteq \langle x_1,\dotsc,x_{k} \rangle$ for $1\le k \le \ell$. Then $B$ becomes a strictly upper triangular matrix with respect to this basis.
\end{proof}

\begin{remark}
    Unfortunately, the graded ring $H^\ast(M(A);R)$ does not necessarily determine the matroid $T(A)$. Let us consider the two following matroids $T_1$ and $T_2$ determined by the $\Z_2$-linear relations
    \begin{gather*}
        v_0 = 0 \\
        v_1 + v_2 + v_3+v_4+v_5+v_6+v_7+v_8+v_9+v_{10} = 0 \\
        v_3+v_4+v_5+v_6+v_7+v_8 +v_{11}+v_{12}+v_{13}+v_{14} = 0 \\
        v_5+v_6+v_7+v_8 +v_9+v_{10}+v_{13}+v_{14}+v_{15}+v_{16} = 0
    \end{gather*}
    and
    \begin{gather*}
        v_0 = 0 \\
        v_1 + v_2 + v_3+v_4+v_5+v_6+v_7+v_8+v_9+v_{10} = 0 \\
        v_2+v_3+v_4+v_5+v_6+v_7+v_{11}+v_{12}+v_{13}+v_{14} = 0 \\
        v_5+v_6+v_7+v_8 +v_9+v_{10}+v_{13}+v_{14}+v_{15}+v_{16} = 0
    \end{gather*}
    respectively. One easily checks that the two matroids are non-isomorphic. 
    On the other hand, the two matroids make isomorphic cohomology $R$-algebras, each of which is generated by one degree 1 element, three degree 8 elements, and four degree 10 elements and the products which do not involve the degree 1 element are all zero.
\end{remark}

\begin{table}
    \[\begin{array}{c|cccccccc}
        \hline
        n  & 1 & 2 & 3 & 4 & 5 & 6 & 7 & 8 \\ \hline
        \mathcal{D}_n & 1 & 2 & 4 & 12 & 54 & 472 & 8512 & 328416 \\
        \mathcal{M}_n & 1 &2 &4 & 8 & 16& 32 & 68 & 148 \\ \hline
    \end{array}\]
    \caption{$\mathcal{D}_n$ is the number of diffeomorphism types of $n$-dimensional real Bott manifolds found in \cite{CMO2017}, and $\mathcal{M}_n$ is the number of non-isomorphic binary matroids on an $n$-set (A076766 of \cite{oeis}).}\label{tabl:realbottnumbers}
\end{table}

\begin{remark} \label{rem:real_Bott}
    The $\Z_2$-cohomology ring of a real Bott manifold determines its diffeomorphism type \cite{KM2009}. Therefore in Table~\ref{tabl:realbottnumbers}, $\mathcal{D}_n$ is the number of $\Z_2$-cohomology rings (up to isomorphism) of $n$-dimensional real Bott manifolds, and we know that the number of isomorphism types of $\Q$-cohomology rings of $n$-dimensional real Bott manifolds does not exceed $\mathcal{M}_n$. Since $\mathcal{M}_n < \mathcal{D}_n$ for some $n$, we conclude that the $\Q$-cohomology ring of a real Bott manifold does not determine its diffeomorphism type and thus the $\Q$-cohomology is strictly ``weaker'' than $\Z_2$-cohomology in the case of real Bott manifolds.
\end{remark}

The $\Q$-cohomology ring of the real Bott manifold is a fairly weak invariant as the above remark shows, but it is worth emphasizing that $\Q$-cohomology ring is still stronger than the $\Q$-cohomology group. Consider the two binary matroids $T_1$ and $T_2$ determined by the $\Z_2$-linear relations
\[  v_0=0,\quad v_1+v_2+v_3+v_4+v_5+v_6=0,\quad v_4+v_5+v_7=0,\quad v_5+v_6+v_8=0 \]
and
\[  v_0=0,\quad v_1+v_2+v_3+v_4+v_5+v_6=0,\quad v_3+v_4+v_7=0,\quad v_5+v_6+v_8=0 \]
respectively.
We choose two $9$-dimensional real Bott manifolds $M_1$ and $M_2$ whose corresponding binary matroids are $T_1$ and $T_2$, respectively. Then $M_1$ and $M_2$ have identical rational Betti numbers
$$
    (\beta^0, \beta^1, \ldots, \beta^9) = (1,1,0,2, 3, 3, 4, 2, 0,0).
$$ However, in $H^\ast(M_1)$, the all multiplications of elements of degree greater than 1 are trivial, while $H^\ast(M_2)$ has a non-trivial multiplication.
Hence, $M_1$ and $M_2$ have non-isomorphic $\Q$-cohomology rings, although they have isomorphic $\Q$-cohomology groups.

\subsection{Generalized real Bott manifolds} \label{subsec:gen_real_bott}
Let us consider a more generalized notion of real Bott manifolds.
Let $K = \partial ( \prod_{i=1}^k \Delta^{n_i} )^\ast $ be the boundary complex of the product of simplices $\prod_{i=1}^k \Delta^{n_i}$.
Then, the vertex set of $K$ is $\{1_1, \ldots, 1_{n_1+1}, 2_1, \ldots, 2_{n_2 + 1}, \ldots, k_1, \ldots, k_{n_k +1} \}$, and the minimal non-face of $K$ consists of $\{ i_1, \ldots, i_{n_i +1 } \}$ for all $i=1, \ldots, k$.

Let us assume that we are given a $k \times k$ block matrix $\bA$ over $\Z_2$ which is strictly upper triangular whose $(i,j)$th block of $A$ is of size $1 \times n_i$.
We denote by $\bI$ the $k\times k$ block matrix whose diagonal elements are all $1$ and the others are all $0$, where the size of block is equal to that of $A$.
Put $n = n_1 + \cdots + n_k$ and $m = n + k$.
Then, the $n\times m$ matrix $\Lambda(\bA) = \left(I_n \mid \bI^t +\bA^t \right)$ represents a non-singular characteristic function over $K$, where each column of $\Lambda(\bA)$ is assigned by the vertex set of $K$ in the order of
$$
\{1_1, \ldots, 1_{n_1}, 2_1, \ldots, 2_{n_2},\ldots, k_1, \ldots, k_{n_k} \mid 1_{n_1+1}, 2_{n_2+1}, \ldots, k_{n_k+1}\}.
$$
The corresponding real toric space $M^\R(K, \Lambda(\bA))$ is a ($k$-stage) \emph{generalized real Bott manifold}, and it is denoted by $M(\bA)$.
Let $A$ be the $k \times k$ matrix over $\Z_2$ such that the $(i,j)$-component of $A$ is congruent to the sum of all components of the $(i,j)$th block of $\bA$ if $i \neq j$ or $n_i +1$ if $i=j$. We call this $A$ by the \emph{underlying matrix} of $M(\bA)$.
One remarks that if $n_1 = \cdots = n_k = 1$, then $M(\bA)$ is indeed a real Bott manifold $M(A)$.
See \cite{Choi-Masuda-Suh2010} for details.

Then, similarly to Proposition~\ref{prop:R-cohom_real_bott}, we have the following proposition.
\begin{proposition} \label{prop:R-cohom_gen_real_bott}
    The cohomology ring $H^\ast(M(\bA);R)$ depends only on the matroid $T(A)$ and the integers $n_1, \ldots, n_k$. It is generated by the circuits of $T(A)$ as a graded $R$-algebra. More precisely, let $x_C$ be the formal symbol for the cohomology class corresponding to a circuit $C$. Then
    \[
        H^\ast(M(A);R) \cong R\langle x_C \mid C \in \cC\rangle/\sim,
    \]
    where we have the relations
    \[
      x_{C_1}x_{C_2} = \left\{
      \begin{array}{ll}
        (-1)^{|C_1|\cdot|C_2|}x_{C_2}x_{C_1}, & \hbox{if $C_1 \cap C_2 = \varnothing$;} \\
        0, & \hbox{if $C_1 \cap C_2 \ne \varnothing$.}
      \end{array}
    \right.
    \]
    The grading is given by $\deg x_C = \sum_{i \in C} n_i$.
\end{proposition}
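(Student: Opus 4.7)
My plan is to follow the strategy of Proposition~\ref{prop:R-cohom_real_bott} with the extra bookkeeping appropriate to the block structure. First, I would recognize
\[
    K = \partial\Delta^{n_1}\star \partial\Delta^{n_2}\star\cdots\star\partial\Delta^{n_k}
\]
as a simplicial join with vertex groups $V_i = \{i_1,\ldots,i_{n_i+1}\}$. For each $\omega \subseteq [m]$, the induced subcomplex factors as
\[
    K_\omega = (\partial\Delta^{n_1})_{\omega\cap V_1}\star\cdots\star(\partial\Delta^{n_k})_{\omega\cap V_k},
\]
and each factor is empty or contractible unless $\omega\cap V_i \in \{\varnothing, V_i\}$. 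Hence $\widetilde{H}^\ast(K_\omega) \ne 0$ only for $\omega$ parameterized by a subset $S \subseteq [k]$ via $\omega_S = \bigcup_{i\in S}V_i$, in which case $K_{\omega_S}$ is a sphere of dimension $\sum_{i\in S}n_i - 1$, contributing a free $R$-module of rank one to $\widetilde{H}^{(\sum_{i\in S}n_i)-1}$.

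Second, I would identify which $\omega_S$ lie in $\row \Lambda(\bA)$. Since $\Lambda(\bA) = (I_n \mid \bI^t + \bA^t)$, the first $n$ coordinates of $\omega_S$ single out the row combination uniquely (it is the sum of the rows indexed by $J = \bigcup_{i\in S}\{i_1,\ldots,i_{n_i}\}$), and comparing the last $k$ coordinates of the result with $\omega_S$ reduces, using strict upper triangularity of $\bA$ (which kills the diagonal block contribution) and the diagonal convention $A_{ii} = n_i + 1$ (which absorbs the $n_i \bmod 2$ contribution from the $\bI^t$-part), to the single condition $\sum_{i'\in S}A_{i,i'} = 0$ for every $i \in [k]$. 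In other words, $\omega_S \in \row \Lambda(\bA)$ if and only if the columns of $A$ indexed by $S$ sum to zero, that is, $S$ is a cycle of the binary matroid $T(A)$.

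Third, the Main Theorem then yields
\[
    H^\ast(M(\bA);R) \cong \bigoplus_{S \text{ cycle of } T(A)} \widetilde{H}^{\ast-1}(K_{\omega_S}),
\]
each summand free of rank one in degree $\sum_{i\in S}n_i$; denote its generator by $x_S$. The product $x_{S_1}x_{S_2}$ is computed through the canonical map $K_{\omega_{S_1\triangle S_2}} \to K_{\omega_{S_1}}\star K_{\omega_{S_2}}$ and lands in $\widetilde{H}^{\ast-1}(K_{\omega_{S_1\triangle S_2}})$. A direct degree count forces $x_{S_1}x_{S_2} = 0$ whenever $S_1 \cap S_2 \ne \varnothing$ (the left-hand degree exceeds the unique nontrivial degree on the right by $2\sum_{i\in S_1\cap S_2}n_i > 0$), while for disjoint $S_1, S_2$ the join structure of $K$ makes the canonical map a simplicial isomorphism, so $x_{S_1}x_{S_2} = \pm x_{S_1\sqcup S_2}$. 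Since in a binary matroid every cycle is a disjoint union of circuits, $\{x_C : C \in \cC\}$ generates $H^\ast(M(\bA);R)$, and the asserted commutation sign follows from the graded commutativity of the cup product.

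The main obstacle is the second step: the block structure of $\bA$, the definition of the underlying matrix $A$, and the diagonal convention $A_{ii} = n_i + 1$ are finely tuned precisely so that the parity contributions from the $\bI^t$-part of $\Lambda(\bA)$ are absorbed into $A$, leaving a condition that depends only on the matroid $T(A)$ and on the tuple $(n_1,\ldots,n_k)$. Once this absorption is verified, the rest of the argument is a direct analogue of the real Bott case.
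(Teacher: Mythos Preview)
Your approach is exactly the one the paper intends: the paper's ``proof'' of this proposition is literally the single sentence ``similarly to Proposition~\ref{prop:R-cohom_real_bott}'', and your three steps (identify the $K_\omega$'s with nontrivial cohomology via the join decomposition, determine which $\omega_S$ lie in $\row\Lambda(\bA)$ in terms of $T(A)$, read off the product from the Main Theorem) are precisely that analogy, with the bookkeeping of step two---the absorption of the $n_i\bmod 2$ terms into the diagonal $A_{ii}=n_i+1$---being the only new ingredient, which you handle correctly.

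One small caveat: your last sentence appeals to graded commutativity to obtain the stated sign $(-1)^{|C_1|\cdot|C_2|}$, but graded commutativity actually gives $(-1)^{\deg x_{C_1}\cdot\deg x_{C_2}}$ with $\deg x_C=\sum_{i\in C}n_i$, and these two exponents need not agree modulo $2$ once the $n_i$ are not all equal to $1$. This is almost certainly a typo in the paper's statement (the formula was copied verbatim from Proposition~\ref{prop:R-cohom_real_bott}, where $|C|=\deg x_C$), and your argument proves the correct relation with the degree-product sign.
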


\begin{corollary} \label{cor:2-stage}
  The $\Z_2$-cohomology ring of a two-stage generalized real Bott manifold determines its $\Q$-cohomology ring.
\end{corollary}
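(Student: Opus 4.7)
The plan is to combine Proposition~\ref{prop:R-cohom_gen_real_bott} with a Davis--Januszkiewicz-style computation of $H^{\ast}(M(\bA);\Z_2)$, and then to show that the $\Z_2$-cohomology ring determines all the combinatorial data that enters $H^{\ast}(M(\bA);\Q)$ when $k=2$. By Proposition~\ref{prop:R-cohom_gen_real_bott}, $H^{\ast}(M(\bA);\Q)$ depends only on $T(A)$ and $(n_1,n_2)$. For $k=2$ the underlying matrix $A$ is a $2\times 2$ upper-triangular matrix over $\Z_2$ with $A_{11}\equiv n_1+1$, $A_{22}\equiv n_2+1$ and $A_{12}\equiv a\pmod{2}$, where $a$ denotes the number of $1$'s in the only non-zero block $\bA_{12}$ of $\bA$. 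Hence $T(A)$, and therefore the $\Q$-cohomology ring, depends only on the triple $(n_1,n_2,a\bmod 2)$.

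On the $\Z_2$-side I would solve the linear relations coming from the rows of $\Lambda(\bA)$ to identify
\[
    H^{\ast}(M(\bA);\Z_2)\;\cong\;\Z_2[\alpha,\beta]/\bigl(\alpha^{n_1+1},\;\beta^{\,n_2-a+1}(\alpha+\beta)^{a}\bigr),
\]
whose Poincaré polynomial $(1+t+\cdots+t^{n_1})(1+t+\cdots+t^{n_2})$ factors uniquely into cyclotomic polynomials over $\Z[t]$ and thus recovers $\{n_1,n_2\}$ as a multiset.

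Finally I would extract the parity of $a$ from the abstract $\Z_2$-ring whenever it actually affects the $\Q$-cohomology. A short case check on the parities of $n_1,n_2$ shows that this is only needed when $n_2$ is odd in the chosen labelling; otherwise the $\Q$-cohomology ring is already pinned down by $\{n_1,n_2\}$. The obvious symmetry $\beta\mapsto\alpha+\beta$ of the presentation replaces $a$ by $n_2+1-a$, and this operation preserves $a\bmod 2$ precisely when $n_2$ is odd, so it never confuses different parities of $a$. To rule out any remaining hidden identifications I would use the nilpotency orders $\min\{k:v^{k}=0\}$ attached to the three non-zero elements of $H^{1}(M(\bA);\Z_2)$, and if needed the ranks of multiplication by each such $v$ on low-degree cohomology; a finite enumeration then assigns to every isomorphism class of the $\Z_2$-ring a unique triple $(n_1,n_2,a\bmod 2)$ up to the swap $(n_1,n_2,a)\leftrightarrow(n_2,n_1,a')$ of the two stages, and a direct inspection of Cases (I)--(IV) of the matroid analysis shows that each such swap pairs presentations that produce the same $\Q$-cohomology ring. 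The main obstacle I anticipate is exactly this last bookkeeping: one must verify that whenever two presentations $(n_1,n_2,a)$ and $(n_1',n_2',a')$ have isomorphic $\Z_2$-cohomology rings, the matroid data $T(A)$ combined with $(n_1,n_2)$ and $T(A')$ combined with $(n_1',n_2')$ produce isomorphic exterior algebras under Proposition~\ref{prop:R-cohom_gen_real_bott}, so that the two $\Q$-cohomology rings automatically agree.
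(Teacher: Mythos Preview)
Your overall strategy is sound and your reductions are correct up to the last step, but the proof is not complete, and the missing piece is exactly the hard part. You correctly identify that $H^{\ast}(M(\bA);\Q)$ depends only on $(n_1,n_2,a\bmod 2)$, and you correctly recover the multiset $\{n_1,n_2\}$ from the Poincar\'e polynomial. The case split on the parity of $n_2$ is also the right one, and your observation that the second column of $A$ never lies in a circuit when $n_2$ is even is precisely what the paper uses. The gap is in the $n_2$ odd case: you assert that nilpotency orders of the three nonzero elements of $H^{1}(-;\Z_2)$, supplemented by ranks of multiplication maps, will pin down $a\bmod 2$, but you do not actually carry this out. Noting that the ``obvious'' substitution $\beta\mapsto\alpha+\beta$ preserves $a\bmod 2$ only shows that this particular isomorphism causes no trouble; it does not rule out other isomorphisms of the $\Z_2$-algebras. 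Moreover, your appeal to a ``finite enumeration'' is not valid as stated, since the pairs $(n_1,n_2)$ range over an infinite set, and you would need a uniform argument. You yourself flag this as the main obstacle, and indeed it is: classifying when two rings $\Z_2[\alpha,\beta]/(\alpha^{n_1+1},\beta^{\,n_2-a+1}(\alpha+\beta)^a)$ are isomorphic is precisely the content of Masuda's Theorem~2.3 in \cite{Masuda2010}.

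The paper takes a much shorter route by quoting that theorem directly: it gives that isomorphic $\Z_2$-cohomology forces $p\equiv q$ or $p\equiv n_2+1-q \pmod{2^{h(n_1+1)}}$, hence in particular $\pmod 2$; then the parity argument (your observation about $\beta\mapsto\alpha+\beta$) finishes the $n_2$ odd case in one line, and the matroid observation about the second column handles $n_2$ even. So your plan would essentially reprove Masuda's classification, and without that classification (or an equivalent uniform invariant argument) the proof is incomplete. If you want to avoid citing \cite{Masuda2010}, you must supply a genuine proof that your proposed invariants separate the parities of $a$ for \emph{all} $(n_1,n_2)$ with $n_2$ odd, and also handle the labelling swap $(n_1,n_2)\leftrightarrow(n_2,n_1)$ rigorously rather than by ``direct inspection''.
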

\begin{proof}
    Assume that $M(\bA)$ and $M(\bB)$ have the same $\Z_2$-cohomology rings.
    Let $A$ and $B$ be the underlying matrices of $\bA$ and $\bB$, respectively.
    Let $p$ (resp., $q$) be the number $q$ of non-zero components of the $(1,2)$th block of $\bA$ (resp., $\bB$).
    Then, by \cite[Theorem~2.3]{Masuda2010}, $p \equiv q \text{ or } n_2 + 1 - q (\text{mod $2^{h(n_1+1)}$})$, where $h(a)$ is the minimal integer $r$ such that $2^r \geq a$. Since $n_1 \geq 1$, $h(n_1+1)$ is positive, so $p$ must have the same parity with $q$ and $n_2+1 -q$.
    If $n_2$ is odd, then $A$ and $B$ are the same. Hence, $M(\bA)$ and $M(\bB)$ have the isomorphic $\Q$-cohomology rings.

    If $n_2$ is even, then $A$ and $B$ can be different. However, in this case, $A$ and $B$ must be of form
    $$A = \begin{pmatrix}
      a & b \\ 0 & 1
    \end{pmatrix} \quad \text{ and } \quad B = \begin{pmatrix}
      a & c \\ 0 & 1
    \end{pmatrix},$$
    where $a, b, c \in \Z_2$.
    Note that $A$ and $B$ have the same first column, and the second column never correspond to the generator of the $\Q$-cohomology ring by Proposition~\ref{prop:R-cohom_gen_real_bott} because there is no dependent column set containing the second column.
    Therefore, $M(\bA)$ and $M(\bB)$ have still isomorphic $\Q$-cohomology rings, as desired.
\end{proof}

There was one interesting question in toric topology called the \emph{cohomological rigiditiy problem for small covers} \cite[Section~4]{Choi-Masuda-Suh2011}: if two small covers have the isomorphic $\Z_2$-cohomology rings, then are they diffeomorphic?
As we mentioned in Remark~\ref{rem:real_Bott}, it is positive for real Bott manifolds.
It, however, is not true in general.
Masuda \cite{Masuda2010} showed that two-stage generalized Bott manifolds provide counterexamples to the problem.
Nevertheless, motivated by Corollary~\ref{cor:2-stage}, it is reasonable to ask the following weaker version of the cohomological rigidity problem.

\begin{question}
  Does the $\Z_2$-cohomology ring of a small cover determine its rational cohomology ring?
\end{question}

\begin{remark}
  It is shown in \cite[Lemma~8.1]{Choi-Masuda-Suh2010TAMS} that every $\Z_2$-cohomology ring isomorphism preserves the Stifel-Whitney class of a small cover. Therefore, the $\Z_2$-cohomology ring of a small cover determines its orientability, and, thus, its $n$th rational cohomology group as well.
  This fact also supports an affirmative evidence of the above question.
\end{remark}

\subsection{Cohomologically symplectic real toric spaces} \label{subsec:cohom_symp}
In this subsection, we assume that $K = \partial P^*$ is the boundary of a simplicial polytope $P^*$ and therefore $\RZ_K$ is a smooth manifold.

\begin{definition}
    A closed manifold $M$ of dimension $2n$ is called \emph{cohomologically symplectic} or \emph{c-symplectic} if there is a cohomology class $\alpha \in H^2(M;\R)$ such that $\alpha^n \ne 0$.
\end{definition}

\begin{lemma}\label{lem:cohosymrealtoricspace}
    Let $K$ be the boundary of a simplicial $2n$-polytope with $m$ vertices and $\Lambda$ a characteristic function over $K$. Then the following hold.
    \begin{enumerate}
        \item The real moment-angle manifold $\RZ_K$ is cohomologically symplectic if and only if there are $n$ homogeneous classes $\alpha_i \in H^{2,\omega_i}(\RZ_K;\Q)$ for $1 \le i \le n$ such that $\alpha_1 \smile \dotsb \smile \alpha_n \ne 0$.
        \item The real toric space $M^\R(K,\Lambda)$ is cohomologically symplectic if and only if there are $n$ homogeneous classes $\alpha_i \in H^{2,\omega_i}(\RZ_K;\Q)$ for $1 \le i \le n$ such that $\omega_i \in \row \Lambda$ for all $i$ and $\alpha_1 \smile \dotsb \smile \alpha_n \ne 0$.
    \end{enumerate}
    In either case, one must have $\omega_1 \triangle \dotsb \triangle \omega_n = [m]$.
\end{lemma}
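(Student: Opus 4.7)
The plan is to treat parts (1) and (2) uniformly using the $(\Z \oplus \row\Lambda)$-grading on $H^\ast(M)$ given by Theorem~\ref{thm:productondelta} (taking $\row\Lambda = \Z_2^m$ in case (1)), together with Theorem~\ref{thm:cohoofsmallcover} and the Hochster-type identification \eqref{eqn:rzphochster}.

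For the ``if'' direction I would form the parametrized class $\alpha(\mathbf{c}) = c_1\alpha_1 + \dotsb + c_n\alpha_n \in H^2(M;\R)$ with indeterminates $c_i \in \R$. Because each $\alpha_i$ has even degree, the $\alpha_i$'s pairwise commute under cup product, so $\alpha(\mathbf{c})^n$ expands as a polynomial in the $c_i$'s with values in $H^{2n}(M;\R)$, and the coefficient of $c_1c_2\cdots c_n$ is $n!\cdot\alpha_1\smile\dotsb\smile\alpha_n\ne 0$. Hence this polynomial does not vanish identically, and any suitable specialization of $\mathbf{c}$ yields a class $\alpha$ with $\alpha^n\ne 0$, so $M$ is c-symplectic.

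For the converse, given $\alpha\in H^2(M;\R)$ with $\alpha^n\ne 0$, I would decompose $\alpha = \sum_{\omega\in\row\Lambda}\alpha^\omega$ into homogeneous summands via Theorem~\ref{thm:cohoofsmallcover} and expand
\[
    \alpha^n = \sum_{(\omega_1,\dotsc,\omega_n)} \alpha^{\omega_1}\smile\dotsb\smile\alpha^{\omega_n}.
\]
Since the total sum is nonzero, at least one summand must be nonzero, yielding the required tuple of homogeneous classes. To descend from $\R$- to $\Q$-coefficients, I would write each $\alpha^\omega$ as an $\R$-linear combination in a $\Q$-basis of $H^{2,\omega}(\RZ_K;\Q)$, expand the product distributively once more, and extract a nonvanishing $\Q$-monomial.

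Finally, for the last assertion, I would combine Theorem~\ref{thm:productondelta} with \eqref{eqn:rzphochster} to place the nonzero product $\alpha_1\smile\dotsb\smile\alpha_n$ in $H^{2n,\,\omega_1\triangle\dotsb\triangle\omega_n}(\RZ_K)\cong\tilde H^{2n-1}(K_{\omega_1\triangle\dotsb\triangle\omega_n})$, reducing the task to showing $\tilde H^{2n-1}(K_\omega)=0$ whenever $\omega\subsetneq[m]$. This vanishing is the main obstacle; I would deduce it from Alexander duality in the $(2n-1)$-sphere $|K|$, namely $\tilde H^{2n-1}(K_\omega) \cong \tilde H_{-1}(S^{2n-1}\setminus|K_\omega|)$, which is zero because any vertex $v\in[m]\setminus\omega$ lies in $S^{2n-1}\setminus|K_\omega|$, making the complement nonempty. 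Therefore $\omega_1\triangle\dotsb\triangle\omega_n$ must equal $[m]$.
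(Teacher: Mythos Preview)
Your proposal is correct and follows essentially the same outline as the paper's proof: decompose into $(\Z\oplus\row\Lambda)$-homogeneous pieces, expand the $n$th power, and read off a nonzero monomial. A few differences are worth noting.

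For the ``if'' direction the paper is slightly slicker: it simply sets $\alpha=\alpha_1+\dotsb+\alpha_n$ and claims $\alpha^n=n!\,\alpha_1\smile\dotsb\smile\alpha_n$. This is an honest equality, not just an inequality: by Theorem~\ref{thm:productondelta} one has $\alpha_i^2\in H^{4,\varnothing}(\RZ_K)\cong\tilde H^3(K_\varnothing)=0$, so every monomial in the multinomial expansion with a repeated factor vanishes, and since the $\alpha_i$ commute (even degree) only the term $n!\,\alpha_1\smile\dotsb\smile\alpha_n$ survives. Your polynomial-in-$c_i$ argument is a perfectly good alternative that avoids this observation, at the cost of a genericity step.

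For the converse and the last assertion you are more explicit than the paper, which simply asserts $H^{2n}(\RZ_K;\Q)=H^{2n,[m]}(\RZ_K;\Q)$ without justification. Your Alexander duality argument supplies exactly the missing step: for $\omega\subsetneq[m]$ the complement $S^{2n-1}\setminus|K_\omega|$ contains the open star of any vertex in $[m]\setminus\omega$, hence is nonempty, and $\tilde H^{2n-1}(K_\omega)\cong\tilde H_{-1}(S^{2n-1}\setminus|K_\omega|)=0$. Likewise your explicit descent from $\R$- to $\Q$-coefficients fills a small gap the paper passes over.
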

\begin{proof}
    First, we give a proof of (1). For ``if'' part, we put $\alpha = \alpha_1 + \dotsb + \alpha_n$. Then $\alpha^n = n! \cdot \alpha_1\smile \dotsb \smile \alpha_n \ne 0$. For ``only if'' part, write $\alpha$ as a sum of homogeneous classes in $H^{2,\omega_i}(\RZ_K;\Q)$, that is, $\alpha = \beta_1 + \dotsb + \beta_\ell$ and take the power of $n$. Then in the expansion of $(\beta_1 + \dotsb + \beta_\ell)^n$, there should exist a nonzero monomial in $H^{2n}(\RZ_K;\Q) = H^{2n,[m]}(\RZ_K;\Q)$ and the proof (1) is done. The proof of (2) is just an analogue of that of (1) together with Theorem~\ref{thm:cohoofsmallcover}.
\end{proof}
\begin{definition}
    Let $K$ be the boundary of a simplicial polytope with $m$ vertices. Suppose that there are $\ell$ homogeneous classes $\alpha_1,\dotsc,\alpha_\ell \in H^{d,\omega_i}(\RZ_K;\Q)$ for $d=1$ or $2$, such that $\alpha_1 \smile \dotsb \smile \alpha_\ell \ne 0$ and $\omega_1 \triangle \dotsb \triangle \omega_\ell = [m]$. When $\balpha = \{\alpha_1,\dotsc,\alpha_\ell\}$, we say that $K$ is \emph{almost cohomologically symplectic with class set $\balpha$}, or shortly \emph{almost c-symplectic}.
\end{definition}

Note that $\cZ_K$ is never c-symplectic since it is 2-connected (Proposition~4.3.5 of \cite{Buchstaber-Panov2015}). In contrast, there are infinitely many examples of c-symplectic real moment angle manifolds as seen below. When $K = \partial P^*$ is almost c-symplectic, observe that $\RZ_K$ is c-symplectic if and only if $P^*$ has even dimension by Lemma~\ref{lem:cohosymrealtoricspace}. Let us denote by $V(K)$ the set of vertices of $K$. Recall that $K$ is called \emph{flag} if every non-face $I \subseteq V(K)$ contains a non-face of cardinality two.

\begin{proposition}\label{prop:flagiscsymp}
    Let $K$ be the boundary of a simplicial polytope. If $K$ is flag, then it is almost c-symplectic.
\end{proposition}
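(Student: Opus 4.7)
My plan is to exploit the degree-1 cohomology classes arising from the non-edges of $K$, which by the flag hypothesis are precisely the minimal non-faces. For each non-edge $E = \{i,j\}$, the induced subcomplex $K_E$ is a copy of $S^0$, so by Theorem~\ref{thm:cohogpofsmallcover} (or \eqref{eqn:pomegahomology}) we get a nonzero class $\alpha_E \in H^{1,E}(\RZ_K;\Q) \cong \widetilde{H}^0(K_E;\Q)$. The goal is to assemble non-edges $E_1,\dotsc,E_\ell$ with $E_1\triangle\dotsb\triangle E_\ell = [m]$ and $\alpha_{E_1}\smile\dotsb\smile\alpha_{E_\ell}\neq 0$, which witnesses almost c-symplecticness with $d=1$.

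First I would verify that every vertex of $K$ belongs to some non-edge. If some $v$ were adjacent to all other vertices in the $1$-skeleton, then by flagness $\{v\}\cup\sigma \in K$ for every face $\sigma\subseteq [m]\setminus\{v\}$, making $v$ a cone point and contradicting $K\simeq S^{2n-1}$. This guarantees the non-edges cover $[m]$, which is a prerequisite for any symmetric-difference argument.

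The main combinatorial work is to select the $E_i$ pairwise disjoint with $E_1\cup\dotsb\cup E_\ell = [m]$ (so $\ell=m/2$) and verify that the product is nonzero. By the Main Theorem, for pairwise disjoint $E_i$ the product $\alpha_{E_1}\smile\dotsb\smile\alpha_{E_\ell}$ is the pull-back of the generator of $\widetilde{H}^{\ell-1}(K_{E_1}\star\dotsb\star K_{E_\ell}) = \widetilde{H}^{\ell-1}(S^{\ell-1})$ along the natural simplicial map $K_{[m]}\to S^{\ell-1}$ collapsing each pair $E_j$ to the two points of a $0$-sphere factor, so non-vanishing reduces to a degree-type computation. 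In the base case $\dim P = 2$ (where $2n=2$), the fundamental class of the polygon $K$ itself is a nonzero element of $H^{2,[m]}(\RZ_K;\Q)$, immediately giving almost c-symplecticness with $\ell=1, d=2$, and this also handles the parity obstruction when $m$ is odd (no disjoint cover by non-edges exists). For higher-dimensional obstructed situations, the fallback is to use degree-$2$ classes from $H^{2,\omega}(\RZ_K)\cong \widetilde{H}^1(K_\omega)$ coming from chordless induced cycles of length $\geq 4$, which are abundant in any nontrivial flag complex.

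The main obstacle I expect is precisely the non-vanishing of the cup product in the higher-dimensional $d=1$ case: verifying that the simplicial map $K \to S^{\ell-1}$ induced by a chosen pairwise-disjoint cover is nonzero (indeed of nonzero degree) on top cohomology of $S^{\ell-1}$. I anticipate that this will have to be argued by exploiting the sphere structure of $K$ in tandem with the flag combinatorics, likely via induction on $\dim K$ using the prototypical join decomposition of flag spheres (where the pull-back splits as a tensor product of join factors and the non-vanishing follows from Künneth). Organizing the non-edge selection so that this reduction actually applies is the delicate step that the proof must handle.
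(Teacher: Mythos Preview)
Your approach has a fundamental dimension mismatch that makes it fail in every case except the crosspolytope. If $K$ is the boundary of a simplicial $n$-polytope with $m$ vertices, then $\RZ_K$ is an $n$-dimensional manifold, and the only nonzero piece of $H^{*,[m]}(\RZ_K;\Q)$ sits in degree $n$ (it is $\widetilde H^{n-1}(K;\Q)\cong\Q$). Your scheme of covering $[m]$ by pairwise disjoint non-edges produces $\ell = m/2$ degree-$1$ classes, so the product lands in $H^{m/2,[m]}(\RZ_K;\Q)$. But any flag simplicial $n$-polytope satisfies $m\ge 2n$, with equality only for the crosspolytope; already for the hexagon ($n=2$, $m=6$) one has $m/2>n$ and the product is forced to vanish. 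Equivalently, the simplicial inclusion $K\hookrightarrow K_{E_1}\star\dotsb\star K_{E_\ell}\simeq S^{\ell-1}$ you invoke lands in the $(n-1)$-skeleton, so its pull-back on $\widetilde H^{\ell-1}$ is zero whenever $\ell>n$. The fallbacks you sketch do not repair this: most flag spheres are \emph{not} joins, and trading some non-edges for chordless cycles still leaves too many factors in general.

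The paper's argument avoids this by taking exactly $n$ degree-$1$ classes, not $m/2$. It fixes a facet $\{v_1,\dotsc,v_n\}$ and partitions $[m]$ into $n$ (possibly large) blocks $\omega_1,\dotsc,\omega_n$, where $\omega_i$ consists of $v_i$ together with all remaining vertices given label $i$ by a coloring $\ell\colon V(K)\setminus\{v_1,\dotsc,v_n\}\to[n]$ chosen so that no vertex labeled $i$ is adjacent to $v_i$; flagness guarantees such a coloring exists, since no vertex can be adjacent to every $v_i$. Then $\alpha_i=[u_{v_i}\,t_{\omega_i\setminus v_i}]$ is a cocycle because $v_i$ is isolated in $K_{\omega_i}$, and since the $\omega_i$ are disjoint the product $\alpha_1\smile\dotsb\smile\alpha_n=\pm[u_{v_1}\cdots u_{v_n}\,t_{[m]\setminus\{v_1,\dotsc,v_n\}}]$ corresponds via \eqref{eqn:pomegahomology} to the dual of the chosen facet, a generator of $\widetilde H^{n-1}(K;\Q)$. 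The point you are missing is that the $\omega_i$ need not have size two: allowing larger blocks is precisely what keeps the number of factors equal to $n$.
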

\begin{proof}
    Let us assume that $K$ is of dimension $n-1$ and pick a facet $\{v_1,\dotsc,v_n\}$ of $K$. Since $K$ is a pseudomanifold, the link of the codimension two face
    \[
        \{v_1,\dotsc,\widehat{v_i},\dotsc,v_n\} =  \{v_1,\dotsc,v_{i-1},v_{i+1},\dotsc,v_n\},
    \]
    for $1\le i \le n$, is a set of two elements one of which is $v_i$ and the other is denoted by $w_i$. Observe that $w_i \ne w_j$ if $i \ne j$ thanks to flagness. We take a vertex labeling $\ell\colon V(K)\setminus\{v_1,\dotsc,v_n\} \to \{1,\dotsc,n\}$ such that $\ell(w_i) = i$ for all $i$, and $\ell(v) \ne i$ if $v$ is connected to $v_i$ by an edge. The map $\ell$ exists (not necessarily uniquely) because there is no vertex $v$ connected to $v_i$ by an edge for all $i$, again by flagness. Now the cohomology classes $\alpha_i = [u_{v_i}\prod_{v:\ell(v) = i}t_v]$ are well-defined and one checks that $\alpha_1\smile \dotsb \smile \alpha_n \ne 0$, completing the proof.
\end{proof}

The converse of the above Proposition does not hold. For example, let us denote by $\partial P_k$ the boundary of the $k$-gon. Then the simplicial join $K = \partial P_{k_1} \star \dotsb \star P_{k_\ell}$ is flag if and only if $k_i \ge 4$ for all $i$. But $K$ is always almost c-symplectic as one can see below.

\begin{proposition}
    Let $K$ and $L$ are the boundaries of two simplicial polytopes. Then the simplicial join $K \star L$ is almost c-symplectic if and only if both of $K$ and $L$ are almost c-symplectic.
\end{proposition}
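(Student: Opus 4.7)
The plan is to exploit the identification $\RZ_{K\star L}=\RZ_K\times\RZ_L$ of polyhedral products together with a bigraded Künneth formula. Writing $K=\partial P_K^*$, $L=\partial P_L^*$ with $\dim P_K^*=n_K$ and $\dim P_L^*=n_L$, and setting $[m_{K\star L}]=[m_K]\sqcup[m_L]$, the identity $(K\star L)_{\omega_K\sqcup\omega_L}=K_{\omega_K}\star L_{\omega_L}$ and the simplicial join formula $\tilde H^{p-1}(A\star B;\Q)\cong\bigoplus_{p_K+p_L=p}\tilde H^{p_K-1}(A;\Q)\otimes\tilde H^{p_L-1}(B;\Q)$ combine with the Main Theorem to yield a $(\Z\oplus 2^{[m_{K\star L}]})$-graded $\Q$-algebra isomorphism
\[
H^{p,\omega_K\sqcup\omega_L}(\RZ_{K\star L};\Q)\cong\bigoplus_{p_K+p_L=p}H^{p_K,\omega_K}(\RZ_K;\Q)\otimes H^{p_L,\omega_L}(\RZ_L;\Q).
\]
This refined Künneth decomposition is the main organizational tool.

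For the ``if'' direction, given almost c-symplectic class sets $\{\alpha_i\}$ on $K$ and $\{\beta_j\}$ on $L$, I would form the class set $\{\alpha_i\otimes 1\}\cup\{1\otimes\beta_j\}$ on $\RZ_{K\star L}$. The symmetric difference of the $\omega$'s equals $(\triangle_i\omega_i^K)\sqcup(\triangle_j\omega_j^L)=[m_K]\sqcup[m_L]$, and the cup product factors as $(\prod_i\alpha_i)\otimes(\prod_j\beta_j)\ne 0$, directly witnessing almost c-symplecticity of $K\star L$.

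For the ``only if'' direction, take a witness $\{\gamma_i\}\subseteq H^{d_i,\nu_i^K\sqcup\nu_i^L}(\RZ_{K\star L};\Q)$ with $d_i\in\{1,2\}$, $\triangle_i\nu_i=[m_K]\sqcup[m_L]$, and $\prod_i\gamma_i\ne 0$. I would decompose $\gamma_i=\sum_{p_K+p_L=d_i}\gamma_i^{(p_K,p_L)}$ via the bigraded Künneth and expand the product over bidegree choices $(p_K^i,p_L^i)_i$. Since $K\simeq S^{n_K-1}$, the space $H^{p,[m_K]}(\RZ_K;\Q)\cong\tilde H^{p-1}(K;\Q)$ vanishes unless $p=n_K$ (analogously for $L$), so $\prod_i\gamma_i$ lives in the one-dimensional top bidegree $H^{n_K,[m_K]}(\RZ_K)\otimes H^{n_L,[m_L]}(\RZ_L)$. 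This forces $\sum d_i=n_K+n_L$ and selects a bidegree combination $(p_K^i,p_L^i)_i$ with $\sum p_K^i=n_K$ for which $\prod_i\gamma_i^{(p_K^i,p_L^i)}\ne 0$. Indices with $p_K^i=0$ force $\nu_i^K=\varnothing$ and contribute only to $L$; those with $p_L^i=0$ similarly contribute only to $K$; from such pure contributions one reads off the desired witnesses directly.

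The main obstacle will be the possible mixed $(1,1)$-pieces arising when some $d_i=2$, since $\gamma_i^{(1,1)}\in H^{1,\nu_i^K}(\RZ_K)\otimes H^{1,\nu_i^L}(\RZ_L)$ need not be a simple tensor. To handle these I would expand $\gamma_i^{(1,1)}=\sum_k\alpha_{i,k}\otimes\beta_{i,k}$ and multiply out; since each resulting summand is a scalar multiple of the product of top-bidegree generators, nonvanishing of the total sum forces some index tuple $(k_i)$ with $\prod_i\alpha_{i,k_i}\ne 0$ in $H^{n_K,[m_K]}(\RZ_K)$ and $\prod_i\beta_{i,k_i}\ne 0$ in $H^{n_L,[m_L]}(\RZ_L)$ simultaneously. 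Collecting these $\alpha_{i,k_i}$ (each of degree $1$) together with the pure $K$-classes (of degrees in $\{1,2\}$) supplies the $K$-witness; the symmetric-difference equality $\triangle_{i:p_K^i>0}\nu_i^K=[m_K]$ (using $\nu_i^K=\varnothing$ whenever $p_K^i=0$) shows the extracted data is valid, and the $L$-side is handled symmetrically.
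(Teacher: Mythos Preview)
Your approach is essentially the same as the paper's: both directions rest on the product decomposition $\RZ_{K\star L}=\RZ_K\times\RZ_L$ and the bigraded K\"unneth isomorphism
\[
H^{*,\omega\sqcup\tau}(\RZ_{K\star L};\Q)\cong H^{*,\omega}(\RZ_K;\Q)\otimes H^{*,\tau}(\RZ_L;\Q),
\]
with the ``only if'' direction extracted by looking at the top bidegree $\omega=V(K)$, $\tau=V(L)$. Your write-up is in fact more complete than the paper's very terse proof: the paper simply records the K\"unneth splitting and declares the result proved by ``putting $\omega=V(K)$ and $\tau=V(L)$,'' whereas you explicitly expand each $\gamma_i$ into its K\"unneth components, isolate the possible mixed $(1,1)$ pieces when $d_i=2$, and use the one-dimensionality of $H^{n_K,[m_K]}(\RZ_K)\otimes H^{n_L,[m_L]}(\RZ_L)$ to select a single index tuple whose $K$- and $L$-factors are simultaneously nonzero. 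Your observation that $p_K^i=0$ forces $\nu_i^K=\varnothing$ (since $H^{0,\nu}(\RZ_K)\cong\widetilde H^{-1}(K_\nu)$ vanishes unless $\nu=\varnothing$) is exactly what is needed to verify the symmetric-difference condition for the extracted witnesses, and the paper leaves this implicit.
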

\begin{proof}
    One direction is obvious since $\RZ_{K \star L} = \RZ_K \times \RZ_L$. We show the other direction. Let $\alpha \in H^{*,\omega \sqcup \tau}(\RZ_{K \star L};\Q)$ where $\omega \subseteq V(K)$ and $\tau \subseteq V(L)$. One applies K\"{u}nneth theorem to $K_{\omega \sqcup \tau} = K_\omega \star K_\tau$ and
    \[
        \widetilde{H}^{*,\omega \sqcup \tau}(\RZ_{K \star L};\Q) \cong \widetilde{H}^{\ast,\omega}(\RZ_K;\Q) \otimes \widetilde{H}^{\ast,\tau}(\RZ_L;\Q)
    \]
    and therefore $\alpha$ is a sum of classes of the form $\beta \smile \gamma$, where $\beta$ and $\gamma$ are homogeneous classes in $H^{\ast,\omega}(\RZ_K;\Q)$ and $H^{\ast,\tau}(\RZ_L;\Q)$ respectively. In particular, the proposition is proved putting $\omega = V(K)$ and $\tau = V(L)$. 
\end{proof}

The previous Proposition can again be applied for real toric spaces. Consider the simplicial join $K = K_1 \star \dotsb \star K_k$ and suppose that $\Lambda$ is a characteristic function over $K$. Suppose that $M^\R(K,\Lambda)$ is cohomologically symplectic. Then one can choose the classes $\alpha_i^j \in H^{d,\omega_i^j}(\RZ_{K_j};\Q)$ for $d=1$ or $2$, $\omega_i^j \subseteq V(K_j)$,  $1\le j \le k$ and $1 \le i \le \ell_j$ such that
\[
    \alpha_1^j \smile \dotsb \smile \alpha_{\ell_j}^j \ne 0,\quad \omega_1^j \triangle \dotsb \triangle \omega_{\ell_j}^j = V(K_j).
\]
Denote by $\balpha^j = \{\alpha_1^j,\dotsc,\alpha_{\ell_j}^j\}$ a set of cohomology classes. Write $\balpha = \balpha^1 \cup \dotsb \cup \balpha^k$ and $\balpha_d = \balpha \cap H^d(\RZ_K;\Q)$ for $d=1,2$.  Suppose that for each $\alpha \in \balpha_2$, $\omega \in \row \Lambda$ if $\alpha\in H^{2,\omega}(\RZ_K;\Q)$, and there is a bijection $\phi\colon \balpha_1 \to \balpha_1$ called a \emph{pairing map} such that
\begin{enumerate}
    \item $\phi(\phi(\alpha)) = \alpha$ for all $\alpha\in A_1$,
    \item $\phi(\alpha) \ne \alpha$, and
    \item $\omega \triangle \omega' \in \row \Lambda$ whenever $\alpha \in H^{1,\omega}(\RZ_K;\Q)$ and $\phi(\alpha) \in H^{1,\omega'}(\RZ_K;\Q)$.
\end{enumerate}
If this assumption holds then we say that $\balpha$ is \emph{$\Lambda$-compatible}. We omit the proof of the below proposition which generalize a result in \cite{Ishida2011}.

\begin{proposition}
    For a characteristic function $\Lambda$ over $K_1 \star \dotsb \star K_k$, the real toric space $M^\R(K_1 \star \dotsb \star K_k,\Lambda)$ is c-symplectic if and only if each $K_i$ is c-symplectic with class set $\balpha_i$ such that $\balpha_1,\dotsc,\balpha_k$ are $\Lambda$-compatible.
\end{proposition}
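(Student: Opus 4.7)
The plan is to combine Lemma~\ref{lem:cohosymrealtoricspace}(2) with the K\"unneth decomposition induced by $\RZ_{K_1 \star \dotsb \star K_k} = \RZ_{K_1}\times\dotsb\times\RZ_{K_k}$. Because each $\omega \subseteq V(K_1 \star \dotsb \star K_k)$ splits uniquely as $\omega = \omega^1 \sqcup \dotsb \sqcup \omega^k$ with $\omega^j \subseteq V(K_j)$, this K\"unneth splitting respects the second grading. A key observation is that $H^{0,\omega^j}(\RZ_{K_j};\Q) = \widetilde{H}^{-1}((K_j)_{\omega^j};\Q)$ vanishes whenever $\omega^j \ne \varnothing$, so any homogeneous class of bidegree $(2,\omega)$ either sits entirely in one factor $K_j$ (when $\omega \subseteq V(K_j)$) or lies in $H^{1,\omega^{j_1}}(\RZ_{K_{j_1}};\Q) \otimes H^{1,\omega^{j_2}}(\RZ_{K_{j_2}};\Q)$ for exactly two indices $j_1 \ne j_2$; this dichotomy is the structural input underlying the pairing map $\phi$.

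For the ``if'' direction I would assemble the required symplectic classes on $\RZ_K$ from the given data by keeping each $\alpha \in \balpha_2$ (allowed because $\omega \in \row\Lambda$) and replacing each pair $\{\alpha,\phi(\alpha)\} \subseteq \balpha_1$ by the cup product $\alpha \smile \phi(\alpha)$, which by Theorem~\ref{thm:productondelta} lies in $H^{2,\omega\triangle\omega'}(\RZ_K;\Q)$ with $\omega\triangle\omega' \in \row\Lambda$ by $\Lambda$-compatibility. The cup product of all of these new degree-$2$ classes factors under K\"unneth as $\bigotimes_j(\alpha_1^j \smile \dotsb \smile \alpha_{\ell_j}^j)$, which is nonzero by the almost c-symplectic hypothesis on each $K_j$, and the associated $\omega$'s triangle-sum to $V(K)$, so Lemma~\ref{lem:cohosymrealtoricspace}(2) yields that $M^\R(K_1 \star \dotsb \star K_k,\Lambda)$ is c-symplectic.

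For the ``only if'' direction I start from classes $\alpha_1,\dotsc,\alpha_n \in H^{2,\omega_i}(\RZ_K;\Q)$ with $\omega_i \in \row\Lambda$ whose cup product is a nonzero top class, as provided by Lemma~\ref{lem:cohosymrealtoricspace}(2). Expanding each $\alpha_i$ in a K\"unneth basis of pure tensors, the nonvanishing of $\alpha_1 \smile \dotsb \smile \alpha_n$ in the one-dimensional top cohomology $H^{2n,V(K)}(\RZ_K;\Q)$ forces some choice of pure-tensor summands $\widetilde{\alpha}_i$ of each $\alpha_i$ to have a nonzero product; each $\widetilde{\alpha}_i$ still lies in $H^{2,\omega_i}$ and so preserves the $\row\Lambda$ condition. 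Projecting onto each factor $K_j$, one collects the degree-$2$ contributions from the $\widetilde{\alpha}_i$ with $\omega_i \subseteq V(K_j)$ together with the degree-$1$ factors arising from mixed pure tensors; their cup product in $H^\ast(\RZ_{K_j};\Q)$ is nonzero and the supports triangle-sum to $V(K_j)$, so $K_j$ is almost c-symplectic with class set $\balpha^j$. The pairing $\phi$ then sends each degree-$1$ class $\beta_i$ to its partner $\gamma_i$ extracted from the mixed tensor $\widetilde{\alpha}_i = \beta_i \otimes \gamma_i$, producing an involution with no fixed points, and $\omega\triangle\omega' = \omega_i \in \row\Lambda$ supplies $\Lambda$-compatibility.

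The main technical obstacle is the pure-tensor selection step in the ``only if'' direction: one must extract from each $\alpha_i$ a single K\"unneth summand so that the resulting cup product is still nonzero. This reduces to the observation that $H^{2n,V(K)}(\RZ_K;\Q)$ is one-dimensional, so the expanded sum $\sum_{(p_1,\dotsc,p_n)} \alpha_1^{(p_1)}\smile\dotsb\smile\alpha_n^{(p_n)}$, being equal to $\alpha_1\smile\dotsb\smile\alpha_n \ne 0$, necessarily contains at least one nonvanishing summand. Once this is secured, the remainder of the argument is straightforward bookkeeping of the $(\Z\oplus\row\Lambda)$-bigrading via Theorem~\ref{thm:productondelta}.
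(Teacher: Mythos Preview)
The paper explicitly omits the proof of this proposition (``We omit the proof of the below proposition which generalize a result in \cite{Ishida2011}''), so there is nothing to compare against. Your argument is essentially correct and is the natural one: the K\"unneth splitting of $\RZ_{K_1\star\dotsb\star K_k}=\RZ_{K_1}\times\dotsb\times\RZ_{K_k}$ respects the $(\Z\oplus\Z_2^m)$-grading, and the vanishing of $H^{0,\omega^j}(\RZ_{K_j};\Q)$ for $\omega^j\ne\varnothing$ forces every homogeneous degree-$2$ K\"unneth summand to be either a single-factor class or a product of two degree-$1$ classes from distinct factors, which is exactly what produces the pairing $\phi$.

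One small point worth making explicit in the ``only if'' direction: the degree-$1$ factors you extract are automatically pairwise distinct, so $\phi$ is a genuine fixed-point-free involution. Indeed, if some $\beta\in H^{1,\tau}(\RZ_{K_j};\Q)$ occurred twice among the tensor factors landing in $K_j$, then $\beta\smile\beta$ would appear in the $K_j$-component of the top product; but $\beta$ has odd degree over $\Q$, so $\beta^2=0$, contradicting nonvanishing. With that observation your bookkeeping goes through.
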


\begin{remark}
    Recall that a closed 2-form $\omega$ in a smooth $2n$-manifold is a \emph{symplectic form} if $\omega^n$ is nowhere vanishing. A naturally following question is: ``For what $K$ does the real moment-angle manifold $\RZ_K$ admit a symplectic form?'' The general answer seems quite non-trivial and the only known examples are the simplicial joins of polygon boundaries whose corresponding real moment-angle manifolds are products of orientable surfaces. It is worthwhile to note that in \cite{GL2013}, Gitler and Lopez de~Medrano presented many families of $\RZ_K$ diffeomorphic to connected sums of sphere products, but the sphere products almost always contain a sphere factor of dimension $\ge 3$, preventing the manifold from being symplectic.
\end{remark}
We present the following two questions related to the above Remark.
\begin{question}
    Let $K = \partial P^*$ be the boundary of a simplicial $2n$-polytope.
    \begin{enumerate}
        \item If $K$ is almost c-symplectic, then does $\RZ_K$ admit a symplectic form?
        \item If $K$ is flag, then does $\RZ_K$ admit a symplectic form?
    \end{enumerate}
\end{question}
In Proposition~\ref{prop:flagiscsymp}, a cohomology class of $\RZ_K$ of top degree is generated by degree one classes. The following can be regarded as its strengthening.
\begin{conjecture}
    Let $K = \partial P^*$ be the boundary of a simplicial $2n$-polytope. Then $K$ is flag if and only if $H^*(\RZ_K;\Q)$ is generated by degree one elements.
\end{conjecture}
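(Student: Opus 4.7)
My plan is to prove the two directions separately. The implication that degree-$1$ generation implies flagness admits a clean argument via a topological retraction onto a sphere, while the converse is combinatorially more intricate and I expect it to be the main obstacle.

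For the easy direction I argue by contrapositive. Suppose $K$ is not flag, and fix a minimal non-face $\sigma \subseteq [m]$ with $|\sigma| = k \geq 3$. By minimality every proper subset of $\sigma$ is a face of $K$, so $K_\sigma = \partial\Delta^{k-1}$ and $\RZ_{K_\sigma} \cong \partial([0,1]^k) \cong S^{k-1}$. The coordinate projection $\pi\colon \RZ_K \to \RZ_{K_\sigma}$ that forgets the entries outside $\sigma$ is well defined, since $\tau \cap \sigma$ is a simplex of $K_\sigma$ for every $\tau \in K$; moreover the inclusion $\iota\colon \RZ_{K_\sigma} \hookrightarrow \RZ_K$ extending by $0$'s outside $\sigma$ satisfies $\pi \circ \iota = \mathrm{id}$. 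Hence the induced ring map $\iota^\ast\colon H^\ast(\RZ_K;\Q) \twoheadrightarrow H^\ast(S^{k-1};\Q)$ is surjective. If $H^\ast(\RZ_K;\Q)$ were generated by $H^1$, its image would be generated by $\iota^\ast H^1 \subseteq H^1(S^{k-1};\Q) = 0$ (using $k \geq 3$), forcing the target to be concentrated in degree zero and contradicting $H^{k-1}(S^{k-1};\Q) \neq 0$.

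For the forward direction (flag $\Rightarrow$ generated in degree $1$) the main theorem reduces the task to showing that whenever $\widetilde{H}^{p-1}(K_\omega;\Q) \neq 0$ with $p \geq 2$, every such class is a sum of products $\beta_1 \smile \dotsb \smile \beta_p$ with $\beta_i \in \widetilde{H}^0(K_{\omega_i};\Q)$ and $\omega_1 \triangle \dotsb \triangle \omega_p = \omega$. Two strategies look promising. The first is to induct on $|\omega|$, using that induced subcomplexes of flag complexes are flag and extracting a non-edge $\{a,b\} \subseteq \omega$ to compare $K_\omega$ with the induced subcomplexes on $\omega \setminus \{a\}$, $\omega \setminus \{b\}$, and $\omega \setminus \{a,b\}$ via Mayer--Vietoris. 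The second is to invoke Fr\"oberg's theorem that $\Q[K]$ is Koszul precisely when $K$ is flag, and translate this Koszulness into multiplicative generation of $H^\ast(\RZ_K;\Q)$ in the lowest positive degree. The main obstacle is that the analogous assertion for $\cZ_K$ exploits the product vanishing when $\omega_1 \cap \omega_2 \neq \varnothing$, whereas the real relations \eqref{eqn:utrelations} instead place the product in $H^{\ast,\omega_1 \triangle \omega_2}$; consequently one must build, at the chain level, explicit decompositions of the higher classes of $K_\omega$ as products of degree-one classes keyed to non-edges, and ensuring that these decompositions assemble coherently for general flag $K$ is the delicate combinatorial content still to be carried out.
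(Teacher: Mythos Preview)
The statement you are addressing is labeled a \emph{Conjecture} in the paper; the authors do not supply a proof, and indeed remark only that Proposition~\ref{prop:flagiscsymp} establishes the much weaker fact that the top-degree class is a product of degree-one classes when $K$ is flag. So there is no proof in the paper to compare your attempt against.

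That said, your contrapositive argument for the implication ``generated in degree one $\Rightarrow$ flag'' is correct and self-contained. Since $K=\partial P^*$ has no ghost vertices, a failure of flagness produces a minimal non-face $\sigma$ with $|\sigma|=k\ge 3$; then $K_\sigma=\partial\Delta^{k-1}$, $\RZ_{K_\sigma}\cong S^{k-1}$, and your retraction $\pi$ and section $\iota$ are well defined for the reasons you give, yielding a surjective ring map $H^\ast(\RZ_K;\Q)\twoheadrightarrow H^\ast(S^{k-1};\Q)$ whose target is not generated in degree one. This half of the conjecture is therefore settled by your argument, and it goes beyond what the paper proves.

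For the forward implication you have only an outline. Both strategies you sketch are reasonable starting points, but neither is yet a proof: the Mayer--Vietoris induction must actually produce, for each nonzero class in $\widetilde H^{p-1}(K_\omega;\Q)$, a decomposition into products of degree-one classes under the $\triangle$-graded product of the main theorem, and you have not shown how to extract such a decomposition from the long exact sequence; the Koszulness route via Fr\"oberg's theorem is suggestive, but the standard translation of Koszulness into multiplicative generation concerns $\mathrm{Tor}$ over the polynomial ring (equivalently $H^\ast(\cZ_K)$, where products with overlapping $\omega$'s vanish), and you correctly identify that the real case behaves differently because of the relations $u_it_i=u_i$, $t_it_i=1$. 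Until one of these outlines is made precise, the forward direction remains open---which is exactly why the paper states it as a conjecture.
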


\section*{Acknowledgments}
The authors would like to appreciate to Dr. Li~Cai for the motivating communication concerning the cup product.
They are also thankful to Professors Hiroaki Ishida and Shizuo Kaji for helpful comments about the transfer homomorphism.
The first named author thanks to Professor Alex Suciu at Northeastern university for providing a wonderful sabbatical location conducive to thinking deep thoughts.
The second named author is grateful to Professor Yunhyung Cho for the discussion about cohomologically symplectic real moment angle manifolds.

\bigskip

\providecommand{\bysame}{\leavevmode\hbox to3em{\hrulefill}\thinspace}
\providecommand{\MR}{\relax\ifhmode\unskip\space\fi MR }
\providecommand{\MRhref}[2]{%
  \href{http://www.ams.org/mathscinet-getitem?mr=#1}{#2}
}
\providecommand{\href}[2]{#2}

\end{document}